\newtheorem{theorem}{Theorem}[section]
\newtheorem{corollary}[theorem]{Corollary}
\newtheorem{lemma}[theorem]{Lemma}
\theoremstyle{definition}
\newtheorem{definition}[theorem]{Definition}
\newtheorem{remark}[theorem]{Remark}
\newtheorem{example}[theorem]{Example}
\newcommand{\R}{\mathbb R}
\newcommand{\N}{\mathbb N}
\renewcommand{\d}{\mathrm d}
\newcommand{\eps}{\varepsilon}
 \newcommand{\dis}{\displaystyle}
\newcommand{\beq}{\begin{equation}}
\newcommand{\eeq}{\end{equation}}
\newcommand{\tcv}{\textcolor{violet}}
\title[]{Solvability of Dirichlet boundary value problems governed by non-monotone differential operators}
\author[Anceschi F., Marcelli C., Papalini F.]{Francesca Anceschi, Cristina Marcelli, Francesca Papalini}
\address[F. Anceschi, C. Marcelli, F. Papalini]{Dipartimento di Ingegneria Industriale e Scienze Matematiche,
Universit\`a Politecnica delle Marche, Via Brecce Bianche 12, 60131 Ancona, Italy}
\email{f.anceschi@staff.univpm.it (Anceschi), c.marcelli@staff.univpm.it (Marcelli), f.papalini@staff.univpm.it (Papalini)}
\subjclass[2020]{Primary: 34B15; Secondary: 34B16, 34B40, 34L30.}
\keywords{Boundary value problems, nonlinear differential operators, non-monotone differential operators, $\Phi$-Laplacian operator, singular equations}
\begin{document}

\begin{abstract}
 	We prove existence results for Dirichlet boundary value problems for equations of the type
	\begin{align*}
	 	\left( \Phi(k(t) x'(t) ) \right)' = f(t, x(t) , x'(t) ) \qquad \text{for a.e. } t \in I:=[0,T] ,
	\end{align*}
	where  $\Phi : J \to \R $ is a generic possibly non-monotone differential operator defined in a open interval \(J\subseteq \R\),
 	$k:I \to \mathbb{R}$, $k$ is measurable with $k(t) >0$ for a.e. $t \in I$ and $f: \mathbb{R}^3 \to \mathbb{R}$ is a Carathéodory function.
 	
Under very mild assumptions, we prove the existence of solutions for suitably prescribed boundary conditions, and we also address 
the study of the existence of heteroclinic solutions on the half-line $[0,+\infty)$.	
 	
\end{abstract}

\maketitle

\section{Introduction} \label{sec:intro}
Let us consider the boundary value problem (b.v.p.) 
\begin{align} \label{PD}
    \begin{cases}
       \left( \Phi(k(t) x'(t) ) \right)' = f(t, x(t) , x'(t) ) \qquad \text{for a.e. } t \in I:=[0,T], \\
       x(0) = \nu_1 , \qquad x(T) = \nu_2
    \end{cases}
\end{align}
where:
\begin{itemize}
	\item[-] $\Phi : J \to \R $ is a continuous operator, not necessarily monotone, defined in an open interval \(J\) 
	(bounded or unbounded); 
	\item[-] $k:I \to \mathbb{R}$ is measurable, with $k(t) >0$ for a.e. $t \in I$;
	\item[-]$f: I\times \mathbb{R}^2 \to \mathbb{R}$ is a Carathéodory function.
\end{itemize}

\medskip

Boundary value problems involving nonlinear ODEs of this type arise in some interesting models for image denoising, starting from 
the pioneering work of Perona and Malik \cite{PeronaMalik}, see forthcoming Example \ref{ex:Perona}, where a non-monotone diffusion of the type
\begin{equation*}
	 \Phi(s):=\dfrac{s}{1+s^2}
\end{equation*}
is considered to enhance edges in image reconstruction while running forward in time. Furthermore, the continuity and local monotonicity assumption for  $\Phi$ 
allow us to deal with possibly periodic diffusions, such as
\begin{align*}
	\Phi(s):= \sin (s), 
\end{align*}
see Example \ref{ex:sin} in the sequel. These periodic type diffusions may arise in the context of control problems of second 
order with a periodic type feedback. 

Furthermore, one could also consider models involving the difference of Laplacians, where $\Phi$, defined as 
\begin{align*}
	\Phi (s) = |s|^\alpha s - |s|^\beta s, \qquad \alpha \ne \beta,
\end{align*}
is a non-monotone operator, which could be useful to model the interaction of competing populations.

\medskip
According to our knowledge, despite a very wide literature concerning b.v.p. as \eqref{PD} involving strictly monotone operators, see for instance  \cite{An, BRR, CaPo, Ca, KFA} and the references therein, up to now no result was available concerning b.v.p. governed by non-monotone differential operators. Additionally, our approach covers, as particular cases, problems with strictly monotone differential operators. 
Therefore, this work can be regarded as
 a new unifying approach to b.v.p. governed by various type of differential operators, whether monotone or not.

More in detail, our results can be applied also to b.v.p. with strictly increasing homeomorphisms 
\begin{align*} 
	\Phi: (-a, a) \to (-b, b), \qquad \text{with } 0 < a, b \leq + \infty,
\end{align*}
arising in various models, e.g., in non-Newtonian
fluid theory, diﬀusion of flows in porous media, nonlinear elasticity, and theory
of capillary surfaces, see e.g. \cite{EV}, \cite{HV}, and, more recently, the modeling of glaciology, see for
instance \cite{BRR}, \cite{GR}, \cite{PR}.

When $a=b= +\infty$, the main prototype for $\Phi$ is the $r$-Laplacian (see forthcoming in Example \ref{ex:lapl}), where $\Phi$ is defined as
\begin{align*}
	\Phi(s) = | s|^{r-2} s, \qquad \text{with } r >1.
\end{align*}
When $a = + \infty$ and $b < + \infty$, the map $\Phi$ is referred to as 
non-surjective or bounded $\Phi$-Laplacian and its main prototype is the mean curvature operator,   
\begin{align*}
	\Phi(s) = \frac{s}{\sqrt{1+s^2}}, \qquad s \in \mathbb{R}.
\end{align*}
see for instance \cite{BM}, \cite{FMP}. On the other hand, when $a < + \infty$, then the $\Phi$-Laplacian becomes singular and 
in this case the main prototype is the relativistic operator 
\begin{align*}
	\Phi(s) = \frac{s}{\sqrt{1-s^2}}, \qquad s \in (-1, 1),
\end{align*}
see for example \cite{BM2}, \cite{CMP}, \cite{Ma}, or its generalization, see for instance \cite{prel}, given by the 
$p-$relativistic operator defined as 
\begin{align*}
	\Phi(s) = \frac{|s|^{p-2} s}{(1-|s|^p )^{1- \frac1p}}, \qquad p >1.
\end{align*} 

\medskip
Our technique relies on a suitable compatibility condition between the right-hand 
side of the equation in \eqref{PD} and the prescribed boundary data. 
So, we obtain the existence of solutions for suitable values of the boundary data. On the other hand, we require only minimal assumptions, since we do not need neither a Nagumo-type condition, nor the existence of a pair of well-ordered upper and lower-solutions, which are usually assumed in the above cited papers in order to obtain the existence of solutions for any boundary data. Hence, our result is a novelty also in the particular context of strictly increasing differential operators (see Corollary \ref{cor:cor} below). 

\medskip
More in detail, in  what follows we also assume
 \beq \label{ip:kappa}
 1/k \in L^p(I) \quad \text{ for some } p \geq 1,
\eeq    
and we use 
 the following notation: 
\begin{align}
    \label{eq:notat}
  \quad 
    \quad k_p := \Big \| \frac1k \Big \|_{L^p(I)}, 
    \quad k_1 := \Big \| \frac1k \Big \|_{L^1(I)} \text{ and }   \quad s^* := \frac{\nu_2 - \nu_1}{k_1}.
\end{align}

We underline that when \(k\) vanishes at some point, then the b.v.p. is singular and may have an interest in itself regardless of the type of differential operator.

\medskip
In light of assumption \eqref{ip:kappa}, it is natural to search for solutions to \eqref{PD} that belong to $W^{1,p}(I)$, and more specifically
are intended in the following sense.
\begin{definition} \label{def:sol}
    A function $x \in W^{1,p}(I)$ is a solution to problem \eqref{PD} if 
    \begin{itemize}
        \item $x(0)=\nu_1$, $x(T)=\nu_2$;
        \item $\Phi \circ (k x') \in W^{1,1}(I)$;
        \item $\left( \Phi(k(t) x'(t) ) \right)' = f(t, x(t) , x'(t) ) $ a.e. in $I$. 
    \end{itemize}
\end{definition}

\medskip
Our main result is the following.

\begin{theorem}
\label{thm:main1}
Let assumption \eqref{ip:kappa} hold, and suppose that  
\(\Phi\) is locally strictly monotone at \(s^*\), where \(s^*\) is the slope defined in \eqref{eq:notat}. 
Let \(J^*\subseteq J\) be an open interval, containing \(s^*\), where \(\Phi\) is strictly monotone.

Assume that there exists a positive function $\psi \in L^1_+(I)$  satisfying
 \beq \label{ip:bound}        
   \Phi(s^*) \pm  2 \| \psi \|_{L^1(I)} \in \Phi(J^*),
  \eeq   
and
\beq \label{ip:norm}
    | f(t,x,y(t)) | \leq \psi(t) \qquad \text{a.e. in } I, 
\eeq
for every  $x\in \R$ and $y\in  L^p(I)$ such that:
\beq \label{ip:stimax} 
\frac{x-\nu_1}{k_1}\in J^* \quad \text{and} \quad
\left|\Phi\left(\frac{x-\nu_1}{k_1}\right)- \Phi(s^*)\right|<2 \| \psi \|_{L^1(I)} ,
\eeq
\beq \label{ip:stimay} 
k(t)y(t)\in J^* \quad \text{and} \quad \left|\Phi(k(t)y(t))- \Phi(s^*)\right|<2 \| \psi \|_{L^1(I)} 
\quad \text{for a.e. } t \in I.
\eeq

Then, there exists a solution $x \in W^{1,p}(I)$ to problem \eqref{PD}, satisfying estimates in \eqref{ip:stimax} (with $x$ replaced by $x(t)$) and
\eqref{ip:stimay} (with $y(t)$ replaced by $x'(t)$).
\end{theorem}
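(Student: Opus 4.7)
The strategy is a Schauder fixed-point argument in $L^p(I)$, motivated by the following heuristic: any solution $x$ to \eqref{PD} satisfies, by integrating the ODE,
\[
\Phi(k(t) x'(t)) \;=\; c \,+\, F(t), \qquad F(t) := \int_0^t f(s, x(s), x'(s))\,ds,
\]
for some constant $c = \Phi(k(0)x'(0))$. Strict monotonicity of $\Phi$ on $J^*$ lets us invert locally, and the Dirichlet condition $x(T)=\nu_2$ determines $c$ implicitly through the compatibility equation $\int_0^T \Phi^{-1}(c+F(t))/k(t)\,dt = \nu_2-\nu_1$. The plan is therefore: set up an $L^p$-operator whose fixed points correspond to solutions, confine it to a closed convex invariant set, and apply Schauder.

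Put $M := \|\psi\|_{L^1(I)}$. By \eqref{ip:bound} and the local strict monotonicity of $\Phi$, the partial inverse $\Phi^{-1}$ maps $[\Phi(s^*)-2M,\Phi(s^*)+2M]$ homeomorphically onto a compact subinterval $[\alpha',\beta']\subset J^*$ containing $s^*$ (WLOG $\Phi$ is increasing on $J^*$). Consider
\[
K \;:=\; \{\,v\in L^p(I)\;:\; k(t)v(t)\in[\alpha',\beta']\text{ a.e. in }I\,\},
\]
which is closed, convex, and bounded in $L^p$ since $|v(t)|\leq \max(|\alpha'|,|\beta'|)/k(t)$ and $1/k\in L^p$. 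For $v\in K$, define $x_v(t):=\nu_1+\int_0^t v$, $F_v(t):=\int_0^t f(s,x_v(s),v(s))\,ds$; since $v$ and $x_v$ then lie in the region described by \eqref{ip:stimax}–\eqref{ip:stimay}, assumption \eqref{ip:norm} gives $|F_v(t)|\leq M$. For fixed $v\in K$ the map $c\mapsto \Psi_v(c):=\int_0^T \Phi^{-1}(c+F_v(t))/k(t)\,dt$ is continuous and strictly increasing on $[\Phi(s^*)-M,\Phi(s^*)+M]$, with $\Psi_v(\Phi(s^*)-M)\le s^*k_1=\nu_2-\nu_1\le \Psi_v(\Phi(s^*)+M)$; the intermediate value theorem therefore yields a unique $c_v$ realizing equality, and I set
\[
(Tv)(t) \;:=\; \frac{\Phi^{-1}(c_v+F_v(t))}{k(t)}.
\]
By construction $k(t)(Tv)(t)\in[\alpha',\beta']$, i.e.\ $T(K)\subseteq K$.

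For continuity, if $v_n\to v$ in $L^p$ then $x_{v_n}\to x_v$ uniformly; extracting a subsequence with a.e.\ convergence and invoking dominated convergence (dominator $\psi$), $F_{v_n}\to F_v$ uniformly, so by uniqueness of $c_v$ and continuity of $\Phi^{-1}$ we get $T v_n\to Tv$ in $L^p$. Compactness of $T(K)$ follows from Arzel\`a–Ascoli: $\{F_v\}$ is equibounded by $M$ and equicontinuous with modulus $\omega(h):=\sup_t\int_t^{t+h}\psi$; the numbers $c_v$ stay in a compact interval; hence $\{g_v:=\Phi^{-1}(c_v+F_v)\}$ is relatively compact in $C(I)$, and the bound $\|g_v/k-g/k\|_{L^p}\le k_p\|g_v-g\|_\infty$ upgrades this to relative compactness of $T(K)$ in $L^p$. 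Schauder furnishes a fixed point $v^*\in K$; then $x^*(t):=\nu_1+\int_0^t v^*$ satisfies $x^*(0)=\nu_1$, $x^*(T)=\nu_2$ by the compatibility identity, $\Phi(k\,x^{*\prime})=c_{v^*}+F_{v^*}\in W^{1,1}(I)$, and differentiation recovers the ODE pointwise a.e. The desired estimates follow from $v^*\in K$ together with $|c_{v^*}-\Phi(s^*)|\le M$.

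The most delicate point, and the one I would spend most care on, is checking that membership in $K$ really legitimises the application of \eqref{ip:norm}: the strict inequalities in \eqref{ip:stimax}–\eqref{ip:stimay} should be matched against the closed bounds $[\alpha',\beta']$ that make $K$ closed (the boundary values occur only on a null set, but this must be argued), and one must verify, via $x_v(t)-\nu_1=\int_0^t v$ and the assumption $1/k\in L^p$, that $(x_v(t)-\nu_1)/k_1\in J^*$ for all $t\in I$, not just $t=T$. Once this uniform pointwise bound on $|f|$ is secured, the compactness and fixed-point machinery proceed routinely.
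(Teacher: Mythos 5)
Your overall strategy---integrate the equation to $\Phi(k(t)x'(t))=c+\int_0^t f$, pin down $c$ through a strictly increasing compatibility map, and close the loop with Schauder---is the same as the paper's. The implementation differs in two genuine ways. First, the paper does not work on an invariant set: it proves an abstract existence result for the auxiliary problem whose functional right-hand side is dominated by $\psi$ for \emph{every} $u\in W^{1,p}(I)$ (Theorem \ref{thm:abstract}), and then feeds in the \emph{truncated} nonlinearity $F_u(t)=f\bigl(t,\mathcal{T}_u^{N_1,N_2}(t),\mathcal{T}_{u'}^{\eta_1,\eta_2}(t)\bigr)$, checking a posteriori that the fixed point never activates the truncation; you instead keep $f$ untruncated and confine the operator to the closed convex set $K\subset L^p(I)$. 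Second, your compactness argument (Arzel\`a--Ascoli for $g_v=\Phi^{-1}(c_v+F_v)$ in $C(I)$, followed by the factorization $Tv=g_v\cdot\tfrac1k$ with the single fixed factor $1/k\in L^p(I)$) is cleaner than the paper's Riesz--Kolmogorov estimate on translates of $g_x'$, which must control $1/k(\cdot+h)-1/k(\cdot)$ separately.

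There is, however, one concrete gap, precisely at the point you flag. To define $F_v$, $c_v$ and $Tv$ for $v\in K$ you need $|f(t,x_v(t),v(t))|\le\psi(t)$, but \eqref{ip:norm} is only assumed under the \emph{strict} inequalities in \eqref{ip:stimay}, i.e. for $k(t)y(t)$ in the open set where $|\Phi(k(t)y(t))-\Phi(s^*)|<2M$. A generic element of the closed set $K$ can satisfy $k(t)v(t)=\beta'$ on a set of positive measure (e.g. $v=\beta'/k$ itself), so your parenthetical claim that ``the boundary values occur only on a null set'' is false; for such $v$ the hypothesis gives no bound on $f$, hence $F_v$ need not be bounded by $M$ and $c_v+F_v(t)$ may leave the domain of $\Phi^{-1}$, leaving $Tv$ undefined. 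You cannot simply pass to the open conditions, since the resulting set is not closed and Schauder fails. The paper's truncation device is exactly the repair: $F_u$ is defined and dominated by $\psi$ for every $u$, so the fixed-point machinery runs on the whole space (the paper too implicitly reads \eqref{ip:norm} with non-strict inequalities at the truncation levels, but at least its operator is globally well defined). The second issue you raise and leave open also needs attention: from $k(s)v(s)\in[\alpha',\beta']$ one only gets $(x_v(t)-\nu_1)/k_1=\theta(t)\,w(t)$ with $\theta(t)=\bigl(\int_0^t 1/k\bigr)/k_1\in[0,1]$ and $w(t)\in[\alpha',\beta']$, i.e. a point of the convex hull of $\{0\}\cup[\alpha',\beta']$; when $0\notin[\alpha',\beta']$ this leaves the region described by \eqref{ip:stimax}, so the applicability of \eqref{ip:norm} to the pair $(x_v(t),v(t))$ is not automatic (the paper's own estimate \eqref{eq:claim1} glosses over the same point).
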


\bigskip	
Of course, if \(\Phi\) is globally strictly monotone with \(\Phi(J)=\R\), then assumptions \eqref{ip:bound} is trivially satisfied. This occurs when  \(\Phi\) is a homeomorphism from \(\R\) to \(\R\) (as the \(r\)-Laplacian operator), or a singular operator (as the relativistic one). So, in these cases assumption \eqref{ip:norm} is sufficient for the existence of solutions and the following result holds, which is a novelty in the context of monotone operators.

\begin{corollary}\label{cor:cor}
Let assumption \eqref{ip:kappa} hold and suppose that  
 \(\Phi:J\to \R\) a continuous, strictly monotone, surjective operator with $s^*\in J$.
Assume that there exists a positive function \(\psi\in L^1_+(I)\), such that \eqref{ip:norm} holds true 
for every  $x\in \R$ and $y\in  L^p(I)$ satisfying \eqref{ip:stimax} and \eqref{ip:stimay} with $J^*$ replaced by $J$.

Then, problem \eqref{PD} admits a solution.
\end{corollary}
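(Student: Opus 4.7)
The corollary is essentially a direct specialization of Theorem \ref{thm:main1}, so the plan is simply to verify that its hypotheses become vacuous or are automatically satisfied in the globally strictly monotone, surjective setting, and then quote the theorem.

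Concretely, I would set $J^*:=J$ in the application of Theorem \ref{thm:main1}. Since $\Phi:J\to\R$ is assumed to be strictly monotone on all of $J$, it is in particular locally strictly monotone at $s^*$, and the interval $J^*=J$ is open, contains $s^*$ by hypothesis, and is an interval on which $\Phi$ is strictly monotone, so the structural requirements of the theorem on $(\Phi,J^*,s^*)$ are met.

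Next, because $\Phi$ is surjective onto $\R$, we have $\Phi(J^*)=\Phi(J)=\R$, whence trivially
\[
\Phi(s^*)\pm 2\|\psi\|_{L^1(I)}\in\R=\Phi(J^*),
\]
so assumption \eqref{ip:bound} is automatically verified with no further work on $\psi$. The remaining hypotheses \eqref{ip:norm}--\eqref{ip:stimay} (with $J^*$ replaced by $J$) are precisely those assumed in the statement of the corollary. Therefore all the assumptions of Theorem \ref{thm:main1} are fulfilled.

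Applying Theorem \ref{thm:main1} then directly yields a solution $x\in W^{1,p}(I)$ of \eqref{PD}, completing the proof. There is no real obstacle here: the only conceptual point worth emphasizing is that surjectivity of $\Phi$ onto $\R$ is exactly what makes the compatibility condition \eqref{ip:bound} disappear, which is why no size restriction on $\|\psi\|_{L^1(I)}$ (and hence on the boundary data $\nu_1,\nu_2$ through $s^*$) is needed in this case.
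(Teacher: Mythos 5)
Your proposal is correct and coincides with the paper's own (implicit) justification: the authors likewise observe just before the corollary that surjectivity of $\Phi$ onto $\R$ makes \eqref{ip:bound} trivially satisfied with $J^*=J$, so Theorem \ref{thm:main1} applies directly. Nothing is missing.
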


 In order to justify the novelty of the previous result,   we present an example (see Example \eqref{ex:lapl}) where Corollary \ref{cor:cor} is applicable, but the classical Nagumo condition does not hold true, so the example is not covered by existence results already known in literature.

\smallskip

Additionally, in the particular case of singular operators, that is homeomorphisms from an open bounded interval to \(\R\) (as  the relativistic operator), the existence result can be further simplified, as follows.

 \begin{corollary}\label{cor:cor2}
Let assumption \eqref{ip:kappa} hold and suppose that  
 \(\Phi:J\to \R\) a continuous, strictly monotone, surjective operator, defined in a bounded open interval \(J\), with $s^*\in J$.
 
 Assume that there exists a positive function \(\psi\in L^1_+(I)\), such that for a.e. \(t\in I\) we have
\beq \label{ip:cor2}
|f(t,x,y(t))|\le \psi(t) \quad \text{ whenever } \frac{x-\nu_1}{k_1}\in J,\ y\in L^P(I), \ k(t)y(t)\in J.\eeq

Then, problem \eqref{PD} admits a solution.
 \end{corollary}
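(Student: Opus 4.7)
The plan is to derive Corollary \ref{cor:cor2} as a direct application of Theorem \ref{thm:main1}, with the choice \(J^*=J\). Since \(J\) is a bounded open interval and \(\Phi:J\to\R\) is continuous, strictly monotone and surjective, it is a homeomorphism onto \(\R\); in particular \(\Phi(J^*)=\R\), so the compatibility condition \eqref{ip:bound} is trivially fulfilled, regardless of the size of \(\|\psi\|_{L^1(I)}\). Moreover, the local strict monotonicity of \(\Phi\) at \(s^*\) is immediate from the global strict monotonicity, and \(s^*\in J=J^*\) is part of the assumption.

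It then remains to check that the growth condition \eqref{ip:norm} holds on the admissible class \eqref{ip:stimax}--\eqref{ip:stimay}. Here the key observation is that \eqref{ip:cor2} asks for the bound \(|f(t,x,y(t))|\le \psi(t)\) under the sole geometric constraints \((x-\nu_1)/k_1\in J\) and \(k(t)y(t)\in J\), with no restriction on \(|\Phi(\cdot)-\Phi(s^*)|\). Consequently, the set of pairs \((x,y)\) on which \eqref{ip:cor2} imposes the bound is a superset of the set defined by \eqref{ip:stimax}--\eqref{ip:stimay} with \(J^*=J\); hence \eqref{ip:cor2} implies \eqref{ip:norm} for that choice of \(J^*\). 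Once both \eqref{ip:bound} and \eqref{ip:norm} are in force, Theorem \ref{thm:main1} provides a solution \(x\in W^{1,p}(I)\) of \eqref{PD} satisfying the stated estimates, which in particular guarantees \((x(t)-\nu_1)/k_1\in J\) and \(k(t)x'(t)\in J\), so the solution is consistent with the domain of \(\Phi\).

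There is essentially no hard step in this argument: the content of the corollary is that, for singular operators (\(J\) bounded, \(\Phi(J)=\R\)), the a priori bound on \(\Phi\) that appears in \eqref{ip:stimax}--\eqref{ip:stimay} becomes vacuous, because \(\Phi^{-1}\) maps the whole real line back into \(J\). The only point that deserves a sentence of care is checking that the pairs \((x,y)\) singled out by \eqref{ip:stimax}--\eqref{ip:stimay} (with \(J^*=J\)) are a subset of those in \eqref{ip:cor2}, so that the weaker assumption \eqref{ip:cor2} still suffices to trigger Theorem \ref{thm:main1}.
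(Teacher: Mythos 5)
Your proposal is correct and follows essentially the same route the paper intends: take $J^*=J$, note that surjectivity of $\Phi$ onto $\R$ makes \eqref{ip:bound} vacuous, and observe that \eqref{ip:cor2} requires the bound on a superset of the pairs singled out by \eqref{ip:stimax}--\eqref{ip:stimay}, so Theorem \ref{thm:main1} applies directly. Nothing is missing.
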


 Lastly, in Section \ref{sec:half} we deal with the b.v.p. on the half-line $[0,+\infty)$. By means of a suitable limit process we 
obtain sufficient conditions for the existence of heteroclinic solutions.

\subsection*{Strategy of the proof and outline} 
A solution to \eqref{PD} is obtained by a fixed-point method applied to a suitable associated functional b.v.p. (see Section \ref{sec:aux}). Section \ref{sec:proof} is devoted to the proof of Theorem   
\ref{thm:main1}. Then, in Section \ref{sec:ex}  we present some examples related to various interesting models. 
Finally, in Section \ref{sec:half} we provide existence results for the Dirichlet problem on the half-line, together with examples of their application.

\section{Auxiliary problem} 
\label{sec:aux}
Let us consider the auxiliary problem 
\begin{align} \label{PD-aux}
    \begin{cases}
       \left( \Phi(k(t) x'(t) ) \right)' = F_x(t) \qquad \text{for a.e. } t \in I=[0,T], \\
       x(0) = \nu_1 , \qquad x(T) = \nu_2,
    \end{cases}
\end{align}
where $F:W^{1,p}(I) \to L^1(I)$ is a continuous operator such that there exists a non-negative function $\psi \in L^1(I)$ satisfying
\beq \label{ip:psiF}
        |F_x(t) | \leq \psi(t) \qquad \text{for a.e. } t \in I, \text{for all } 
        x \in W^{1,p}(I).
\eeq

\noindent
For every $x \in W^{1,p}(I)$, we define the integral operator
\begin{align}
    \label{eq:intop}
    \mathcal{F}_x(t) = \int_0^t F_x(s)  \d s , \qquad t \in I. 
\end{align}
Of course, $\mathcal{F}$ is a continuous operator in $W^{1,p}(I)$
and from assumption \eqref{ip:psiF} it satisfies
\begin{align}
    \label{eq:Fpsi}
    |\mathcal{F}_x (t) | \leq \| \psi\|_{L^1(I)} \qquad \text{for all } t \in I, x \in W^{1,p}(I).
\end{align}

\medskip
We look for solutions to the auxiliary Dirichlet problem \eqref{PD-aux}
according the following definition.
\begin{definition} \label{def:solaux}
    A function $x \in W^{1,p}(I)$ is a solution to problem \eqref{PD-aux}
    if 
    \begin{itemize}
        \item $x(0)=\nu_1$, $x(T)=\nu_2$;
        \item $\Phi \circ (k x') \in W^{1,1}(I)$;
        \item $\left( \Phi(k(t) x'(t) ) \right)' = F_x(t)$ a.e. in $I$. 
    \end{itemize}
\end{definition}

\medskip
The following preliminary result provides a compatibility condition for the solvability of problem \eqref{PD-aux}.

\begin{lemma}
    \label{lem:aux}   
    Let condition \eqref{ip:psiF} hold and assume that  \(\Phi\) is locally strictly monotone at \(s^*\). Moreover, 
   let \(J^*\subseteq J\) be an open interval containing \(s^*\) where \(\Phi\) is strictly monotone, and assume that 
    \eqref{ip:bound} is satisfied.
    
    \smallskip
    Then, for every $x \in W^{1,p}(I)$ there exists a unique $\beta_x \in \mathbb{R}$ such that
    \begin{align} \label{eq:lemma1}
        \int_0^T \frac{1}{k(t)} \Phi^{-1}\left( \beta_x + \mathcal{F}_x(t) \right) 
        \, \d t = \nu_2 - \nu_1
    \end{align}
    where \(\Phi^{-1}\) is the partial inverse of \(\Phi\), defined in \(\Phi(J^*)\) and taking values on \(J^*\).
    Moreover, 
    \beq \label{eq:betadom}
        \Phi(s^*) - \| \psi \|_{L^1(I)} \leq \beta_x \leq \Phi(s^*) + \| \psi \|_{L^1(I)}.
    \eeq
\end{lemma}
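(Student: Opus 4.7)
The plan is to recast the equation \eqref{eq:lemma1} as a scalar equation $G_x(\beta)=\nu_2-\nu_1$ for the function
\[
 G_x(\beta):=\int_0^T \frac{1}{k(t)}\,\Phi^{-1}\!\bigl(\beta+\mathcal F_x(t)\bigr)\,\d t,
\]
and then apply the intermediate value theorem together with strict monotonicity of $\Phi^{-1}$ on $\Phi(J^*)$ to obtain existence and uniqueness of $\beta_x$ in the strip predicted by \eqref{eq:betadom}.

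The first step is to check that $G_x$ is well defined for all $\beta\in[\Phi(s^*)-\|\psi\|_{L^1(I)},\,\Phi(s^*)+\|\psi\|_{L^1(I)}]$. By \eqref{eq:Fpsi} we have $|\mathcal F_x(t)|\le\|\psi\|_{L^1(I)}$, so for such $\beta$ the argument $\beta+\mathcal F_x(t)$ lies in the interval $[\Phi(s^*)-2\|\psi\|_{L^1(I)},\,\Phi(s^*)+2\|\psi\|_{L^1(I)}]$. Since $\Phi$ is continuous and strictly monotone on the interval $J^*$, the image $\Phi(J^*)$ is itself an interval, and by \eqref{ip:bound} it contains both endpoints $\Phi(s^*)\pm 2\|\psi\|_{L^1(I)}$; hence the whole interval lies in $\Phi(J^*)$ and $\Phi^{-1}$ can be applied pointwise. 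Continuity and boundedness of the integrand (bounded by $1/k(t)\cdot\max_{J^*\cap[\text{range}]}|\Phi^{-1}|$, which is finite on the closed sub-interval) together with $1/k\in L^1(I)$ ensure $G_x$ is well defined and continuous in $\beta$ by dominated convergence.

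The second step is to evaluate $G_x$ at the two endpoints. Assume first that $\Phi$ is strictly increasing on $J^*$, so that $\Phi^{-1}$ is strictly increasing. At $\beta=\Phi(s^*)-\|\psi\|_{L^1(I)}$ one has $\beta+\mathcal F_x(t)\le\Phi(s^*)$, hence $\Phi^{-1}(\beta+\mathcal F_x(t))\le s^*$, giving
\[
 G_x\bigl(\Phi(s^*)-\|\psi\|_{L^1(I)}\bigr)\le s^*\int_0^T\frac{\d t}{k(t)}=s^*k_1=\nu_2-\nu_1,
\]
by the definition of $s^*$ in \eqref{eq:notat}. The symmetric inequality yields $G_x(\Phi(s^*)+\|\psi\|_{L^1(I)})\ge\nu_2-\nu_1$. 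The intermediate value theorem then produces $\beta_x$ in the required range satisfying \eqref{eq:lemma1}, and strict monotonicity of $\Phi^{-1}$ (together with $1/k>0$ a.e.) makes $G_x$ strictly increasing, giving uniqueness. If instead $\Phi$ is strictly decreasing on $J^*$ the same argument applies with the inequalities reversed, and $G_x$ is strictly decreasing.

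I do not anticipate a serious obstacle: the only subtle point is justifying that the argument $\beta+\mathcal F_x(t)$ never leaves the domain $\Phi(J^*)$ of the partial inverse, which is precisely the reason for the quantitative bound \eqref{ip:bound} doubled by the factor $2$ (one $\|\psi\|_{L^1(I)}$ for the range of $\beta$, one for the oscillation of $\mathcal F_x$). Once this is secured, the rest is a one–variable IVT applied to a manifestly continuous and strictly monotone real function.
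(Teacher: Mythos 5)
Your proof is correct, and the core mechanism is the same as the paper's: reduce \eqref{eq:lemma1} to a scalar equation for a continuous, strictly monotone function of $\beta$ and invoke the intermediate value theorem, with \eqref{ip:bound} guaranteeing that $\beta+\mathcal F_x(t)$ stays in $\Phi(J^*)$. The one genuine difference is how the localization \eqref{eq:betadom} is obtained. The paper works on the maximal admissible interval $\mathcal I=(b_1+L,b_2-L)$ where $(b_1,b_2)=\Phi(J^*)$, computes one-sided limits of $\varphi_x$ at its (possibly infinite) endpoints to show $\nu_2-\nu_1\in\varphi_x(\mathcal I)$, and only afterwards derives \eqref{eq:betadom} by a mean value theorem for integrals, writing $\beta_x=\Phi(s^*)-\mathcal F_x(t_x)$. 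You instead restrict $\beta$ from the outset to $[\Phi(s^*)-L,\Phi(s^*)+L]$ and observe that $G_x(\Phi(s^*)-L)\le s^*k_1=\nu_2-\nu_1\le G_x(\Phi(s^*)+L)$, so existence and the bound \eqref{eq:betadom} come in a single stroke; this also sidesteps the paper's bookkeeping with infinite endpoints and the convention for $\Phi^{-1}(\pm\infty)$. Both arguments yield uniqueness from strict monotonicity of $\Phi^{-1}$ on all of $\Phi(J^*)$ together with $1/k>0$ a.e.; your version is marginally more economical, while the paper's version exhibits the full range $\varphi_x(\mathcal I)$ of attainable boundary gaps, which is slightly more information than the lemma requires.
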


\begin{proof} 
We provide the proof in the case \(\Phi\) is increasing in \(J^*\), 
since if \(\Phi\) is decreasing it suffices to replace \(\Phi\) with \(-\Phi\).
Lastly, throughout this proof we denote by \(L:=\|\psi\|_{L^1(I)}\), \(J^*=:(a_1,a_2)\), 
\(\Phi(J^*)=:(b_1,b_2)\), with \(a_1,a_2,b_1,b_2\) possibly infinite.

   Put
       \( \mathcal{I}:= \left( b_1 + L, b_2 -L \right)\) with the obvious meaning of the operations when \(b_1\) or \(b_2\) are infinite.   It is nonempty by assumption \eqref{ip:bound}; moreover, 
 by \eqref{eq:Fpsi},  for every \(x\in W^{1,p}(I)\) we have 
\beq b_1< \xi-L\le \xi +  \mathcal{F}_x(t) \le \xi+ L < b_2 \quad \text{ for every } \xi\in \mathcal{I}, t\in I. \label{eq:chain}\eeq 
So,
for every fixed $x \in W^{1,p}(I)$, we can define the function
\( \varphi_x : \mathcal{I} \to \mathbb{R}\) as
    \begin{align}
        \label{eq:phi}
        \varphi_x(\xi) := \int_0^T \frac{1}{k(t)} \Phi^{-1}
        \left(\xi + \mathcal{F}_x(t) \right)  \d t .
    \end{align}  
Note that the integral in the definition of \(\varphi\) is finite by \eqref{ip:kappa} and \eqref{eq:chain}.
  
   Since $\Phi$ is strictly increasing in \((a_1,a_2)\), also  
     $\Phi^{-1}$ is strictly increasing in $(b_1, b_2)$ and so is $\varphi_x$. 
     Moreover, \(\varphi\) is  
    continuous by the Dominated Convergence Theorem and \eqref{eq:chain}.
    Additionally, for every $\xi \in\mathcal{I}$ we have
    \begin{align*}
        k_1 \Phi^{-1} \left( \xi - L\right)< \varphi_x(t) 
        < k_1\Phi^{-1} \left( \xi + L \right) .
    \end{align*}
   Then,  taking the limits as $\xi \to (b_1 + L)^+$ and  as 
   $\xi \to (b_2 - L)^-$, by \eqref{ip:bound} we get 
   \begin{align*}
        \lim \limits_{\xi \to (b_1 +L)^+} \varphi_x(\xi) 
                        \leq k_1 \Phi^{-1} (b_1 + 2L ) < 
         k_1 \Phi^{-1} \left( b_2 - 2 L \right) \leq  \lim \limits_{\xi \to (b_2 -L)^-} \varphi_x (\xi), 
   \end{align*}
  where \(\Phi^{-1}(\pm \infty)\), if this situation occurs, have to be intended as the limits of \(\Phi^{-1}(\xi)\) as \(\xi\to \pm \infty\).
   
    Hence, by assumption \eqref{ip:bound}, since $\varphi_x$ is strictly increasing and continuous, we get 
    \begin{align*}
        \left( k_1 \Phi^{-1} (b_1 + 2 L ) , k_1 \Phi^{-1} \left( b_2 - 2 L \right) \right) \subset \varphi_x (\mathcal{I}).
    \end{align*}
    Moreover, by assumption \eqref{ip:bound} we also have the following 
    \begin{align*}
       k_1 \Phi^{-1}( b_1 + 2 L) < \nu_2 - \nu_1 < k_1 \Phi^{-1} (b_2 - 2 L),
    \end{align*}
    and hence $\nu_2 - \nu_1 \in \varphi_x(\mathcal{I})$.
    
    Therefore, for every fixed $x \in W^{1,p}(I)$ there exists a unique $\beta_x \in \mathcal{I}$
    such that $\varphi_x (\beta_x) = \nu_2 - \nu_1$, and claim \eqref{eq:lemma1} follows from \eqref{eq:phi}.

    Furthermore, from the Mean Value Theorem, for every $x \in W^{1,p}(I)$ there exists $t_x \in I$ such that 
    \begin{align*}
        k_1 \Phi^{-1} \left( \beta_x + \mathcal{F}_x (t_x) \right) &= \nu_2 - \nu_1,
    \end{align*}
    from which we infer
    \begin{align*}
        \beta_x &= \Phi(s^*) -  \mathcal{F}_x (t_x), 
    \end{align*}
    and thus \eqref{eq:betadom} follows.
\end{proof}

\medskip
\begin{theorem}
    \label{thm:abstract}
    Let assumption \eqref{ip:kappa} hold and suppose that  
\(\Phi\) is locally strictly monotone at \(s^*\), where \(s^*\) is the slope defined in \eqref{eq:notat}. 
Let \(J^*\subseteq J\) be \tcv{an open} interval, containing \(s^*\), where \(\Phi\) is strictly monotone.
 
  Then, problem \eqref{PD-aux} admits a   solution $x \in W^{1,p}(I)$, 
satisfying estimates in \eqref{ip:stimax} (with $x$ replaced by $x(t)$) and
\eqref{ip:stimay} (with $y(t)$ replaced by $x'(t)$).
\end{theorem}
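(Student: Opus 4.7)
The plan is to recast \eqref{PD-aux} as a fixed-point equation. For each $x\in W^{1,p}(I)$, Lemma \ref{lem:aux} supplies a unique scalar $\beta_x$, and I set
\[
Tx(t):=\nu_1+\int_0^t\frac{1}{k(s)}\,\Phi^{-1}\bigl(\beta_x+\mathcal{F}_x(s)\bigr)\,ds,\qquad t\in I,
\]
where $\Phi^{-1}$ is the inverse of $\Phi|_{J^*}$. Then $Tx(0)=\nu_1$ is built in, and \eqref{eq:lemma1} ensures $Tx(T)=\nu_2$; at any fixed point $x=Tx$ one has $k(t)x'(t)=\Phi^{-1}(\beta_x+\mathcal{F}_x(t))\in J^*$ a.e., so $\Phi(k(t)x'(t))=\beta_x+\mathcal{F}_x(t)\in W^{1,1}(I)$ with weak derivative $F_x(t)$. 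Hence every fixed point of $T$ is a solution in the sense of Definition \ref{def:solaux}.

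To produce the fixed point I would apply Schauder's theorem to $T$ restricted to the closed convex bounded set
\[
K:=\bigl\{y\in W^{1,p}(I):\ y(0)=\nu_1,\ |y'(t)|\le M/k(t)\text{ a.e.\ in }I\bigr\},
\]
with $M:=\sup\{|\Phi^{-1}(\eta)|:\eta\in[\Phi(s^*)-2\|\psi\|_{L^1(I)},\Phi(s^*)+2\|\psi\|_{L^1(I)}]\}$, which is finite by \eqref{ip:bound}; the inclusion $T(W^{1,p}(I))\subseteq K$ is immediate from \eqref{eq:betadom} and \eqref{eq:Fpsi}. For continuity, if $x_n\to x$ in $W^{1,p}(I)$, continuity of $F$ gives $F_{x_n}\to F_x$ in $L^1(I)$, hence $\mathcal{F}_{x_n}\to\mathcal{F}_x$ uniformly on $I$. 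The sequence $\{\beta_{x_n}\}$ is bounded by \eqref{eq:betadom}; any subsequential limit $\beta^*$ satisfies \eqref{eq:lemma1} for $x$ (by dominated convergence), so the uniqueness clause of Lemma \ref{lem:aux} forces $\beta^*=\beta_x$, proving the full convergence $\beta_{x_n}\to\beta_x$. Then $(Tx_n)'(t)\to(Tx)'(t)$ pointwise and is dominated by $M/k\in L^p(I)$, so dominated convergence delivers $Tx_n\to Tx$ in $W^{1,p}(I)$. For compactness, the key observation is that $\{\mathcal{F}_x:x\in W^{1,p}(I)\}$ is uniformly bounded and equicontinuous in $C(I)$ -- its modulus is controlled by the absolute continuity of $s\mapsto\int_0^s\psi$ -- so by Arzel\`a--Ascoli any sequence in $T(K)$ admits a subsequence along which both $\mathcal{F}_{x_n}$ and $\beta_{x_n}$ converge; another application of dominated convergence with dominating function $M/k\in L^p(I)$ then yields $L^p$-convergence of the derivatives, hence relative compactness of $T(K)$ in $W^{1,p}(I)$.

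Schauder then produces a fixed point $x\in K$, which by the first paragraph solves \eqref{PD-aux}. The pointwise bound $k(t)x'(t)\in J^*$ together with $|\Phi(k(t)x'(t))-\Phi(s^*)|\le 2\|\psi\|_{L^1(I)}$ is immediate from $\Phi(k(t)x'(t))=\beta_x+\mathcal{F}_x(t)$ combined with \eqref{eq:betadom}--\eqref{eq:Fpsi}, giving \eqref{ip:stimay} along $x'$; the analogous bounds \eqref{ip:stimax} along $x(t)$ would come from the weighted mean value theorem applied to $x(t)-\nu_1=\int_0^t k^{-1}\Phi^{-1}(\beta_x+\mathcal{F}_x)$. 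The step I expect to be the most delicate is the continuity of $x\mapsto\beta_x$, since Lemma \ref{lem:aux} provides only uniqueness and no modulus; however, \eqref{ip:bound} keeps $\beta_x+\mathcal{F}_x(t)$ strictly inside the open set $\Phi(J^*)$, which is precisely what makes the dominated convergence argument in the defining integral equation for $\beta_x$ go through cleanly.
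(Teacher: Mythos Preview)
Your argument is correct and follows the same overall architecture as the paper's: define the integral operator $T=g$, observe that its fixed points solve \eqref{PD-aux}, and apply Schauder. The differences are only in the technical execution of continuity and compactness. For the continuity of $x\mapsto\beta_x$, the paper applies the mean value theorem to the difference of the defining integrals to obtain the explicit bound $|\beta_{x_1}-\beta_{x_2}|\le\|F_{x_1}-F_{x_2}\|_{L^1(I)}$, which is slightly sharper than your compactness-plus-uniqueness argument but not needed for the result. For compactness, the paper verifies the Riesz--Kolmogorov translation-continuity criterion directly for $\{(Tx_n)'\}$ in $L^p(I)$, which forces a somewhat delicate estimate involving the $L^p$-translation continuity of $1/k$; your route---Arzel\`a--Ascoli on the equicontinuous family $\{\mathcal{F}_x\}\subset C(I)$, extraction of a convergent subsequence of $\beta_{x_n}$, then dominated convergence with majorant $M/k\in L^p(I)$---is cleaner and sidesteps that computation entirely. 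Both approaches rely on the same key inclusion $\beta_x+\mathcal{F}_x(t)\in[\Phi(s^*)-2\|\psi\|_{L^1},\Phi(s^*)+2\|\psi\|_{L^1}]\subset\Phi(J^*)$ coming from \eqref{eq:betadom}, \eqref{eq:Fpsi}, and \eqref{ip:bound}.
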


\smallskip

\begin{proof}  Also in this proof we assume with no loss of generality that \(\Phi\) is increasing in the interval \(J^*\), otherwise it suffices to change the sign in the differential equation of problem \eqref{PD-aux}.
Moreover, also throughout this proof we denote by \(L:=\|\psi\|_{L^1(I)}\) and by $\Phi^{-1}$ the partial inverse of $\Phi$.
Lastly, we put 
\[ J^*=:(a_1,a_2), \quad \Phi(J^*)=: (b_1,b_2).\]
Then, let us consider the operator 
    \beq \label{def:g}
        g : W^{1,p}(I) \to W^{1,p}(I) ; \quad
        g_x(t) := \nu_1 + \int_0^t \frac{1}{k(s)} \Phi^{-1} \left( \beta_x + \mathcal{F}_x(s) \right) \d s ,
    \eeq
where \(\mathcal{F}_x\) is defined by \eqref{eq:intop}, and $\Phi^{-1}$ is the partial inverse function of $\Phi$, defined in $\Phi(J^*)$ and taking values in $J^*$.
Notice that if \(x\) is a fixed point for \(g\), then it is a solution to problem \eqref{PD-aux}. In fact, by Lemma \ref{lem:aux}, \(x\) satisfies the boundary conditions. Moreover, for a.e. \(t\in I\) we have
\[ k(t)x'(t)= \Phi^{-1}(\beta_x+ \mathcal{F}_x(t))\]
implying
\[ \Phi(k(t)x'(t))= \Phi(\Phi^{-1}(\beta_x+\mathcal{F}_x(t)))=
\beta_x+\mathcal{F}_x(t). \]
Hence, 
\(\Phi\circ(kx')\) belongs to \(W^{1,1}(I)\) and
\[ (\Phi(k(t)x'(t)))'=F_x(t) \quad \text{ for a.e. } t \in I.\]

We observe that since \(x\in W^{1,p}(I)\) and \(\Phi\circ (kx')\in W^{1,1}(I)\), the continuity of \(\Phi^{-1}\) implies that \(kx'\in C(I)\), meaning that \(kx'\) has a continuous extension on the whole interval \(I\).

Now, we show that the operator \(g\) satisfies all the assumptions of the Schauder's Fixed Point Theorem.

\medskip
   {\bf Step 1.} {\bf $g$ is bounded in $W^{1,p}(I)$.}

\smallskip\noindent
First of all, by \eqref{ip:bound} and \eqref{eq:betadom},  for every \( x\in W^{1,p}(x)\) and  \(t\in I\)  we have
\beq b_1 < \Phi(s^*) -2L \le \beta_x + \mathcal{F}_x(t)\le  \Phi(s^*) +2L <b_2.  \label{eq:codom}\eeq
Hence, the values \(\Phi^{-1}(\Phi(s^*) \pm 2L)\) are both finite. So, if we set
\beq \label{def:AB}
 A^*:=\Phi^{-1}(\Phi(s^*) -2L), \quad 
 B^*:=\Phi^{-1}(\Phi(s^*) +2L)
\eeq 
we have
\beq
\label{eq:domin}
A^*\le \Phi^{-1}(\beta_x+\mathcal{F}_x(t)) \le  B^* \quad \text{
for every } x\in W^{1,p}(I) \text{ and every } t\in I.
\eeq

\smallskip\noindent
Therefore, by \eqref{def:g} for every \(x\in W^{1,p}(I)\)  we have 
\begin{align} \label{eq:claim1}
        \nu_1+k_1A^*\le  g_x(t)\le \nu_1+ k_1B^*, \quad \text{ for every } t \in I, 
    \end{align}
    and
  \beq \label{eq:deriv}
        \frac{A^*}{k(t)}\le g_x'(t)\le \frac{B^*}{k(t)} , \quad \text{ for a.e. } t \in I; 
    \eeq 
   hence, $g$ is bounded in $W^{1,p}(I)$.

    \vspace{4mm}
\medskip
   {\bf Step 2.} {\bf \( g\)  is continuous in $W^{1,p}(I)$.}
  
  \smallskip\noindent 
    Let us fix $x_1, x_2 \in W^{1,p}(I)$ and set $\beta_1 = \beta_{x_1}$ and $\beta_2 = \beta_{x_2}$. Then, 
    \begin{align*}
        \nu_2 - \nu_1 = \int_0^T \frac{1}{k(t)} \Phi^{-1} \left( \beta_1 + \mathcal{F}_{x_1}(t) \right) \, \d t 
        = \int_0^T \frac{1}{k(t)} \Phi^{-1} \left( \beta_2 + \mathcal{F}_{x_2}(t) \right) \, \d t ,
    \end{align*}
    that is
    \begin{align*}
        \int_0^T \frac{1}{k(t)} \left[ \Phi^{-1} \left( \beta_1 + \mathcal{F}_{x_1}(t) \right) 
        - \Phi^{-1} \left( \beta_2 + \mathcal{F}_{x_2}(t) \right) \right] \, \d t = 0 .
    \end{align*}
    Then, from the mean value theorem there exists $\hat{t} \in I$ such that 
    \begin{align*}
        \Phi^{-1} \left( \beta_1 + \mathcal{F}_{x_1}(\hat{t}) \right) = \Phi^{-1} \left( \beta_2 + \mathcal{F}_{x_2}(\hat{t}) \right).
    \end{align*}
    Now, recalling that $\Phi$ is injective we deduce
    \(
        \beta_1 - \beta_2 = \mathcal{F}_{x_2}(\hat{t}) - \mathcal{F}_{x_1}(\hat{t})\). So, 
    \begin{align*}
         |\beta_1 - \beta_2| \leq  \| \mathcal{F}_{x_2} - \mathcal{F}_{x_1} \|_{C(I)}.
    \end{align*}
    But since for every $t \in I$ 
    \begin{align*}
        | \mathcal{F}_{x_2}(t) - \mathcal{F}_{x_1}(t) | \leq \int_0^t | F_{x_1}(s) - F_{x_2}(s) | \d s 
        \leq \| F_{x_1} - F_{x_2} \|_{L^1(I)},
    \end{align*}
     we also get
    \begin{align} \label{eq:beta}
         |\beta_1 - \beta_2| \leq  \| \mathcal{F}_{x_2} - \mathcal{F}_{x_1} \|_{C(I)}
         \leq \| F_{x_1} - F_{x_2} \|_{L^1(I)}.
    \end{align}

    Moreover, by the uniform continuity of \(\Phi^{-1}\) in  \([\Phi(s^*)-2L,\Phi(s^*)+2L]\subset (b_1,b_2)\) (see \eqref{eq:codom}),  for every
    $\varepsilon > 0 $ there exists $\delta = \delta (\varepsilon) > 0$ such that 
    \begin{align} \label{eq:uc}
        | \Phi^{-1}(r_1) - \Phi^{-1}(r_2)| < \min \left\{ \frac{\varepsilon}{2 k_p}, \frac{\varepsilon}{2Tk_1} \right\}
    \end{align}
    for every $r_1, r_2 \in [\Phi(s^*)-2L,\Phi(s^*)+2L]$ with $|r_1 - r_2 | < \delta$.

    Let us now consider a sequence $\{ x_n \}_n \subset W^{1,p}(I)$ such that $x_n \to x$ in $W^{1,p}(I)$, and 
    denote by $\beta_n := \beta_{x_n}$. 
    Since $F: W^{1,p}(I) \to L^1(I)$ is a continuous operator,  there exists $\overline{n}= \overline{n}(x) \in \mathbb{N}$ such that 
    \begin{align*}
        \| F_{x_n} - F_{x} \|_{L^1(I)} < \frac{\delta}{2} \qquad \text{for every $n \geq \overline{n}$ }.
    \end{align*}
    Thus, for every $n \geq \overline n$ and every $t \in I$, by \eqref{eq:beta} it follows
    \begin{align*}
        | \beta_{x_n} + \mathcal{F}_{x_n}(t) - \left( \beta_x - \mathcal{F}_x(t) \right) | &\leq 
        | \beta_{x_n}  - \beta_x | + \int_0^t | F_{x_n}(s) - F_x(s)| \, ds \\
        &\leq 2 \| F_{x_n} - F_x \|_{L^1(I)} < \delta , 
    \end{align*}
    implying that for every $n \geq \overline n$
    and for every $t \in I$ we get by \eqref{eq:uc}
    \begin{align*}
        | g_{x_n}'(t) - g_{x}'(t)| =  \frac{1}{k(t)}\Big|\left( \Phi^{-1}(\beta_{x_n} + \mathcal{F}_{x_n}(t) \right) 
        - \Phi^{-1} \left( \beta_x + \mathcal{F}_{x}(t) \right) \Big | < \frac{1}{k(t)} \frac{\varepsilon}{2 k_p}.
    \end{align*}
    Thus, on the one hand for every $n \geq \overline n$ it is true that 
    \begin{align} \label{eq:gprime}
        \| g_{x_n}' - g_x' \|_{L^p(I)} \leq \frac{\varepsilon}{2 k_p} k_p = \frac{\varepsilon}{2} .
    \end{align}
    On the other hand, again by \eqref{eq:uc} for every $n \geq \overline n$ we have
    \begin{align*}
        | g_{x_n} (t) - g_x (t) | &\leq \int_0^T \frac{1}{k(s)} 
        \Big | \Phi^{-1} \left(\beta_{x_n} + \mathcal{F}_{x_n}(t) \right) 
        - \Phi^{-1} \left( \beta_x + \mathcal{F}_x(t) \right) \Big| \d t \\
        &\leq \int_0^T \frac{1}{k(s)} \frac{\varepsilon}{2 T k_1} \, \d t = \frac{\varepsilon}{2T},
    \end{align*}
    from which it follows that for every $n \geq \overline n$
    \begin{align} \label{eq:g}
        \| g_{x_n} - g_{x} \|_{L^p(I)} < \frac{\varepsilon}{2}. 
    \end{align}
    Hence, combining \eqref{eq:gprime} and \eqref{eq:g} we conclude $\| g_{x_n} - g_{x} \|_{W^{1,p}(I)} \leq \varepsilon$
    for every $n \geq \overline n$, and thus $g:W^{1,p}(I) \to W^{1,p}(I)$ is continuous. 

    \vspace{4mm}
   {\bf Step 3.} {\bf $g$ is a compact operator.}
  
  \smallskip\noindent
    Let $D \subset W^{1,p}(I)$ be a bounded set and $\{ x_n \}_n \subset D$. Our aim is to show that $\{ g_{x_n} \}_n$
    admits a converging subsequence in $W^{1,p}(I)$. 

    First of all, we show that $\{ g_{x_n}' \}_n$ admits a converging subsequence in $L^p(I)$. 
    Note that by \eqref{eq:claim1} for every $s,t \in I$ and every $n \in \mathbb{N}$ we have 
    \begin{align*}
        \Big | \int_s^t g_{x_n}'(\tau) \, d\tau \Big | \leq \Big | g_{x_n} (t) \Big | + \Big | g_{x_n}(s) \Big| 
        \leq 2 | \nu_1| + 2k_1 (|A^*|+|B^*|).
    \end{align*}
     Hence, taking account of \eqref{eq:deriv}, we infer $\{ g_{x_n}' \}_n$ is 
    a uniformly integrable sequence. 

    Then, considering the characterization of relatively compact sets in $L^p$, with $p \geq 1$, (see \cite[Theorem 2.3.6]{GazPap}) we need to show 
    \begin{align*}
        \lim \limits_{h \to 0^+} \int_0^{T-h}  \Big| g_{x_n}' (t+h) - g_{x_n}'(t) \Big | \, \d t = 0 
    \end{align*}
    uniformly in $n \in \mathbb{N}$. 
    In order to do this, observe that since $1/k \in L^p(I)$, with $p \geq 1$, by the Riesz-Kolmogorov Theorem 
    \cite[Theorem 2.3.8]{GazPap} we have
    \begin{align*}
        \lim \limits_{h \to 0^+} \int_0^{T-h} \Big | \frac{1}{k(t+h)} - \frac{1}{k(t)} \Big |^p \, \d t = 0 .
    \end{align*}
    Thus, denoted by $H^*:=\max\{|A^*|, |B^*|\}$ (see \eqref{def:AB}), for every fixed $\varepsilon > 0$, in relation to $\frac{\varepsilon}{(2 H^*)^p}$, there exists $	\rho_1 = \rho_1 (\varepsilon) > 0 $ such that if $0 < h < \rho_1$, then 
    \begin{align}
        \label{eq:tr}
        \int_0^{T-h} \Big | \frac{1}{k(t+h)} - \frac{1}{k(t)} \Big |^p \, \d t < \frac{\varepsilon}{(2 H^*)^p}.
    \end{align}
    Furthermore, from the uniform continuity of $\Phi^{-1}$ in $ [\Phi(s^*)-2L,\Phi(s^*)+2L]$, 
    in relation to $\frac{\varepsilon^{\frac1p}}{2k_p}$, there exists $\delta_1= \delta_1 (\varepsilon) > 0$ such that
    \begin{align}
        \label{eq:circ1}
        | \Phi^{-1}(r_1) - \Phi^{-1}(r_2) | < \frac{\varepsilon^{\frac1p}}{2k_p}
    \end{align}
    if $|r_1 - r_2 | < \delta_1$, with $r_1, r_2 \in  [\Phi(s^*)-2L,\Phi(s^*)+2L]$. 
    Lastly, since $\psi \in L^1(I)$, there exists $\rho_2 = \rho_2(\varepsilon)>0$ such that 
    \begin{align*}
         \int_{\theta_1}^{\theta_2} \psi (t) \, \d t  < \delta_1 
 \quad \text{whenever } \theta_1, \theta_2 \in I,\ |\theta_1 - \theta_2| < \rho_2.    \end{align*} 
 Then, 
    \begin{align} \label{eq:circ2}
        \Big |\mathcal{F}_{x_n} (\theta_1) - \mathcal{F}_{x_n}(\theta_2) \Big | = 
        \Big | \int_{\theta_1}^{\theta_2} F_{x_n}(t) \, \d t \Big | \leq 
         \int_{\theta_1}^{\theta_2} \psi(t) \, \d t  < \delta_1.
    \end{align}
    Combining \eqref{eq:circ1} with \eqref{eq:circ2} we obtain that for every $n \in \mathbb{N}$, it follows 
    \begin{align} \label{eq:circ}
       \Big | \Phi^{-1} (\beta_{x_n} + \mathcal{F}_{x_n}(\theta_1) ) 
       - \Phi^{-1} (\beta_{x_n} + \mathcal{F}_{x_n}(\theta_2) \Big | < \frac{\varepsilon^{\frac1p}}{2k_p} \quad \text{ if } |\theta_1 - \theta_2| < \rho_2.
    \end{align}
    Now,  fix $t \in [0,T)$ and $h\in (0,\rho_2)$ such that $t+h \le T$. Then by \eqref{eq:domin} we have
    \begin{align*}
        &\Big | g_{x_n}'(t+h) - g_{x_n}'(t) \Big | \\ 
        &= 
        \Big | \frac{1}{k(t+h)} \Phi^{-1} \left( \beta_{x_n} + \mathcal{F}_{x_n}(t+h) \right)
        - \frac{1}{k(t)} \Phi^{-1}  \left( \beta_{x_n} + \mathcal{F}_{x_n}(t) \right) \Big | \\
        &\leq \Big | \frac{1}{k(t+h)} -  \frac{1}{k(t)} \Big | \Big | \Phi^{-1}  \left( \beta_{x_n} + \mathcal{F}_{x_n}(t) \right) \Big| \\
        &\qquad + \frac{1}{k(t+h)} \Big|  \Phi^{-1} \left( \beta_{x_n} + \mathcal{F}_{x_n}(t+h) \right)- \Phi^{-1}  \left( \beta_{x_n} + \mathcal{F}_{x_n}(t) \right) \Big| \\
        &\leq \Big | \frac{1}{k(t+h)} -  \frac{1}{k(t)} \Big | \, \, H^* + \\
        & \qquad + \frac{1}{k(t+h)} \Big|  \Phi^{-1} \left( \beta_{x_n} + \mathcal{F}_{x_n}(t+h) \right)- \Phi^{-1}  \left( \beta_{x_n} + \mathcal{F}_{x_n}(t) \right) \Big|.
    \end{align*}
    Observe now that by the convexity of the function \(t\mapsto |t|^p\) we have 
   $|a + b|^p \leq 2^{p-1} |a|^p + 2^{p-1} |b|^p$ and hence by \eqref{eq:tr} and \eqref{eq:circ} we deduce
    \begin{align*}
        &\int_0^{T-h} \Big | g_{x_n}'(t+h) - g_{x_n}'(t) \Big |^p \, \d t \\
        &\qquad \leq 2^{p-1}(H^*)^p \frac{\varepsilon}{(2 H^*)^p} + 2^{p-1} \int_0^{T-h} \frac{1}{k^p(t+h)}  \left( \frac{\varepsilon^\frac1p}{2k_p} \right)^p \, \d t  \\
        &\qquad = \frac{\varepsilon}{2} + 2^{p-1}  \frac{\varepsilon}{2k_p^p} \int_0^{T-h} \frac{1}{k^p(t+h)}  \, \d t\\
        &\qquad \leq  \frac{\varepsilon}{2} + 2^{p-1}  \frac{\varepsilon}{2k_p^p} k_p^p = \varepsilon.
    \end{align*}
    So, we have proved  that 
   \begin{align*}
        \lim \limits_{h \to 0^+} \int_0^{T-h}  \Big| g_{x_n}' (t+h) - g_{x_n}'(t) \Big | \, \d t = 0 .
    \end{align*}
    Hence, the sequence $\{ g_{x_n}' \}_n$ admits a subsequence, by abuse of notation also denoted by 
    $\{ g_{x_n}' \}_n$, which converges in $L^p(I)$ to a function $y \in L^p(I)$ by the characterization 
    of relatively compact sets in $L^p$. 

    Finally,  denoted by 
    \begin{align*}
        z(t) := \nu_1 + \int_0^t y(s) \, ds ,
    \end{align*}
    for $t \in I$, we have
    \begin{align*}
        \Big | g_{x_n}(t) - z(t) \Big | &= \Big | \int_0^t \left( g_{x_n}'(s) - y(s) \right) \, ds \Big | 
        \leq T^{\frac1q} \| g_{x_n}' - y \|_{L^p(I)},
    \end{align*}    
    where in the last line we applied H\"older's inequality with $q=\frac{p}{p-1}$, with the 
    convention that $q = + \infty$ if $p=1$. From this, we infer $g_{x_n}(t) \to z(t)$ uniformly in $I$ and thus 
    $g_{x_n}(t) \to z(t)$ in $L^p(I)$. Since $z'=y$ a.e. it also follows that $\{ g_{x_n} \}_n$ converges in $W^{1,p}(I)$, 
    and in conclusion $g_x$ is a compact operator. 
    
\smallskip    
    By the Schauder Fixed Point Theorem applied to the operator $g$, then 
    it follows there exists a solution to \eqref{PD-aux}.
    Finally, estimates \eqref{ip:stimax} and \eqref{ip:stimay} follow from \eqref{eq:claim1} and \eqref{eq:deriv}, since the solution $x$ is a fixed point of $g$.
\end{proof}

\section{Proof of Theorem \ref{thm:main1}}
\label{sec:proof}
Assume, without restriction, that $\Phi$ is strictly increasing in $J^*$.
   In order to find a solution to \eqref{PD}, for every $\xi, \eta \in L^1(I)$ such that 
    $\xi(t) \leq \eta(t)$ a.e. in $I$ we introduce a truncating operator $\mathcal{T}^{\xi, \eta} : L^1 (I) \to L^1(I)$
    such that 
    \begin{align}
        \label{eq:defT}
        \mathcal{T}_x^{\xi, \eta}(t) &= \max \left\{ \xi(t), \min \{ x(t),\eta(t) \} \right\} , 
        \qquad t \in I.
    \end{align}
    Now, we set
\beq\label{def:ABL} L:=\|\psi\|_{L^1(I)}, \quad
 A^*:=\Phi^{-1}(\Phi(s^*) -2L), \quad 
 B^*:=\Phi^{-1}(\Phi(s^*) +2L),
\eeq     
and 
    \begin{align}
        \label{eq:def}
         N_1 :=  \nu_1  + k_1 A^*, \  N_2 :=  \nu_1  + k_1B^*, \quad \eta_1 (t):= \frac{A^*}{k(t)}, \  \eta_2 (t):= \frac{B^*}{k(t)} ,
    \end{align}
    with the notations above we define a function $F: W^{1,p}(I) \to L^1(I)$, $u \mapsto F_u$, such that 
   \begin{align*}
        F_u(t) = f  \left( t, \mathcal{T}_u^{N_1, N_2}(t), \mathcal{T}_{u'}^{\eta_1, \eta_2} (t) \right) 
        \qquad \text{for every } t \in I. 
    \end{align*}
    Hence, we are now in a position to consider the truncated problem associated to \eqref{PD}, and defined as
    \begin{align}
        \label{PD-trun}
        \begin{cases}
        \left( \Phi(k(t) x'(t) ) \right)' = F_u(t) \qquad \text{a.e. in } I,  \\
        u(0)=\nu_1, \qquad \nu_2(T)=\nu_2.
        \end{cases}
    \end{align}
     We need to show that $F_u$ satisfies the assumptions of Theorem \ref{thm:abstract}. 
     First, we observe that by \eqref{ip:norm}
     \begin{align*}
         |F_u(t)| = \Big | f \left( t,  \mathcal{T}_u^{N_1, N_2}(t), \mathcal{T}_{u'}^{\eta_1, \eta_2} (t) \right)  \Big | ,
     \end{align*}
     and then 
     \begin{align} \label{eq:Fbound}
         |\mathcal{F}_u(t)| = \Big | \int_0^t F_u(s) \, ds \Big | \leq \| \psi \|_{L^1(I)}. 
     \end{align}
     Furthermore, $F$ is a continuous operator from $W^{1,p}(I)$ into $L^1(I)$. Indeed, given a sequence 
     $\{ u_n \} \subset W^{1,p}(I)$ such that $u_n \to u$ in $W^{1,p}(I)$, by possibly passing to a subsequence we have
     \begin{align*}
         u_n \to u \, \, \text{in } W^{1,1}(I) \quad \text{and} \quad u_n' \to u' \, \, \text{in } L^1(I).
     \end{align*}
     Then, by \cite[Lemma A.1]{BCMP}, we get 
     \begin{align*}
         \mathcal{T}_{u_n}^{N_1,N_2} \to \mathcal{T}_{u}^{N_1,N_2} \, \, \text{in } W^{1,1}(I) \quad \text{and} \quad
         \mathcal{T}_{u_n'}^{\eta_1, \eta_2}  \to \mathcal{T}_{u'}^{\eta_1, \eta_2}  \, \, \text{in } L^1(I).
     \end{align*}
     Then, by the above relations and the fact that $f$ is a Carathéodory function, for a.e. $t \in I$ 
     we obtain the following
     \begin{align*}
         \lim \limits_{n \to + \infty} F_{u_n}(t) =
         \lim \limits_{n \to + \infty} f  \left( t, \mathcal{T}_u^{N_1, N_2}(t), \mathcal{T}_{u'}^{\eta_1, \eta_2} (t) \right) 
         = F_u(t). 
     \end{align*}
     In conclusion, by combining this point-wise result with a standard dominated convergence argument based on 
     the bound \eqref{eq:Fbound} we conclude $F_{u_n} \to F_u$ in $L^1(I)$, as $n \to + \infty$, which 
     concludes the proof of the continuity of the operator. 

     Since the assumptions of Theorem \ref{thm:abstract} are satisfied, problem \eqref{PD-trun} admits a solution, which is  a fixed point of the operator \(g\) (see \eqref{def:solaux}). Hence, there exists a function $u \in W^{1,p}(I)$ such that 
     $ u(0)= \nu_1$, $u(T)= \nu_2$, and by \eqref{eq:deriv},  \eqref{def:ABL} and \eqref{eq:def} we get
      \[  
      \eta_1(t)= \frac{A^*}{ k(t) }\le u'(t) \leq   \frac{B^*}{ k(t)}=\eta_2(t) 
      \qquad \text{for a.e. } t \in I.
      \] 
      So, 
   $\mathcal{T}_{u'}^{\eta_1, \eta_2}  (t) = u'(t)$ a.e. (see \eqref{eq:defT}). 
   Furthermore, since
    \(u(t) = \nu_1 +  \int_0^t u'(s) \d s\) for every $ t \in I,$ we have
     \begin{align*}
   N_1=   \nu_1 +   k_1 A^* \le  u(t)  \leq  \nu_1+ k_1B^*   = N_2 \qquad \text{for every } t \in I.
     \end{align*}
     This implies that $\mathcal{T}_{u}^{-N_1, N_2}(t) = u(t) $ for every $t \in I$, and in conclusion $F_u(t) = f(t,u(t), u'(t))$ a.e. in $I$. So, $u$ is a solution to \eqref{PD}. 

\hfill $\Box$

\begin{remark}\label{rem:odd}
Notice that if $\Phi$ is an odd, strictly monotone operator in $J^*$, a symmetric interval with respect to $0$, then 
 conditions \eqref{ip:stimax} and \eqref{ip:stimay} are respectively equivalent to the following ones
\begin{align} \label{ip:stimax_odd}
|x|&<|\nu_1|+k_1\Phi^{-1}(\Phi(|s^*|) +2\|\psi\|_{L^1(I)}); \\
 |y(t)|&<\frac{1}{k(t)}\Phi^{-1}(\Phi(|s^*|) +2\|\psi\|_{L^1(I)})
  \qquad \text{for a.e. } t \in I.  \label{ip:stimay_odd}
\end{align}
\end{remark}

\section{Examples}
\label{sec:ex}
In this section we show by means of some example how Theorem \ref{thm:main1} can be usefully applied in order to get solutions to boundary value problems involving various nonlinear differential operators.

\medskip

\begin{example} \label{ex:Perona}
Let us consider the Perona-Malik operator \(\Phi(s):=\dfrac{s}{1+s^2}\), and the problem
\beq \label{eq:ese1}
	\begin{cases} 
 		\left( \frac{ x'(t)}{1+(x'(t))^2} \right)' = t^\alpha g(x(t)) h(x'(t))   \qquad \text{for a.e. } t \in [0,1], \\
        		x(0) = 0 , \qquad x(1) = \lambda, 
      	\end{cases}
\eeq
where \(\alpha>0\), \(|\lambda|<1\) and \(h,g\) are continuous functions in \(\R\), with \(g\)   bounded.
In this case, we have \(s^*=\lambda\) and we take 
\(J^*=(-1,1)\) so that \(\Phi(J^*)=(-\frac12,\frac12)\).

Now, we set \(M:=\max_{y\in [-1,1]}| h(y)|\) and \(N:=\|g\|_{L^\infty(\R)}\),
 then we introduce \(\psi(t):=M N t^\alpha\). Then,  inequality \eqref{ip:norm} is satisfied for every $x\in \R$ and every $y\in L^p(I)$ such that $y(t)\in J^*$ a.e. $t$.  Moreover, as regards to 
 \eqref{ip:bound}, notice that it holds true  if and only if
\[
  	-\frac12 + \frac{2MN}{\alpha+1} < \Phi(\lambda) < \frac12 -\frac{2MN}{\alpha+1}
\]
that is, taking the symmetry of \(\Phi\) into account,
 \beq
	\label{eq:Perona} 
	\Phi(|\lambda|) < \frac12 -  \frac{2MN}{\alpha+1}
\eeq
and this condition is satisfied when $\frac{MN}{\alpha+1}<\frac14$ and \(|\lambda|\) is small enough.
For instance, if \(M, N\le 1\), then we by requiring \(\alpha>3\) we have \eqref{ip:bound}. 

Notice that the upper bound in condition \eqref{eq:Perona} does not depend on the behaviour of function \(h(y)\) for \(|y|>1\).
\end{example}

\bigskip
\begin{example} \label{ex:sin}
 Let us consider again problem \eqref{eq:ese1} replacing the Perona-Malik operator with an oscillating one, of the type \(\Phi(s)=\sin s\), that is 
\begin{equation*} 
	\begin{cases}  
	\left( \sin( x'(t) ) \right)' = t^\alpha g(x(t)) h(x'(t))   \qquad \text{for a.e. } t \in [0,1], \\
       x(0) = 0 , \qquad x(1) = \lambda ,
       \end{cases}
\end{equation*}
with $\alpha, \lambda \in \mathbb{R}$. 
In this case, if \(|\lambda|<\frac\pi{2}\) we take 
\(J^*=(-\frac\pi{2},\frac\pi{2})\), so that \(\Phi(J^*)=(-1,1)\).
Therefore,  set \(M:=\max_{|y|\le  \frac\pi{2}} |h(y)|\)  (and again \(N=\|g\|_{L^\infty(\R)}\), $\psi(t)=MNt^\alpha$),  inequality \eqref{ip:norm} is satisfied for every $x\in \R$ and every $y\in L^p(I)$ such that $y(t)\in J^*$ a.e. $t\in [0,1]$. Moreover,   assumption \eqref{ip:bound} is fulfilled if 
\beq
\label{eq:ese2}
\sin(|\lambda|) < 1- \frac{2MN}{\alpha+1}.
\eeq
This condition is satisfied whenever \(\lambda\) is sufficienly far from the extrema \(\pm\frac\pi{2}\).
For instance, if \(M,N\le 1\), by requiring \(\alpha>1\), then 
condition \eqref{eq:ese2} becomes \(|\lambda|<\arcsin (1-\frac{2}{\alpha+1})\). 

On the other hand, if $|\lambda|\geq \frac\pi{2}$, one needs to choose accordingly the set $J^*$. For instance, if \(\lambda\in (\frac\pi{2},\frac32 \pi)\), we then choose \(J^*=(\frac\pi{2},\frac32 \pi)\), and the reasoning proceeds as before, replacing \(M\) with \(M':=\max_{\frac\pi{2}\le y\le \frac32\pi} |h(y)|\).

\end{example}

\begin{example} \label{ex:lapl}
Consider now the following autonomous problem involving the p-Laplacian operator
\beq \label{pr:lapl}
	\begin{cases}
		(|x'(t)|^{p-2}x'(t))' = g(x(t)) |x'(t)|^\beta \qquad \text{for a.e. } t \in [0,1], \\
		x(0)=0, \ x(1)=\lambda.
	\end{cases}
\eeq
with \(p>1\), \(\beta \ge 0\), $\lambda \in \mathbb{R}$, and \(g\) a bounded continuous function.

\smallskip
Then, we set \(N:=\|g\|_{L^\infty}\) and we consider the function \(\ell(z):= (\frac{z}{N})^{\frac{p-1}{\beta}}-2z\), 
defined for \(z\ge 0\). 
If \(\beta<p-1\), we have that \(\ell(z)\to +\infty\) as \(z\to +\infty\). So, for every fixed \(\lambda \in \R\) there exists a value \(\bar z\) such that 
\((|\lambda|^{p-1}+2\bar z)^\frac{\beta}{p-1}<\frac{\bar z}{N}\).
So, if we define \(\psi(t):= \bar z\) (a constant function), then
\(\|\psi\|_{L^1(I)}=\bar z\), and 
\[ 
	|g(x)| |y|^\beta \le \bar z \quad \text{ whenever } 
	|y|<(|\lambda|^{p-1} +2\bar z)^\frac{1}{p-1}=\Phi^{-1}(\Phi(|\lambda|)+2\bar z).
\]
Therefore, if \(\beta<p-1\), all the assumptions of Corollary \ref{cor:cor} are fulfilled (see also Remark \ref{rem:odd}) and problem \eqref{pr:lapl} admits solutions for every \(\lambda \in \R\).

\medskip
Instead, if \(\beta>p-1\),  there exists a value \(\delta>0\) such that \(\ell(z)>0\) in \((0,\delta)\). So, if \(|\lambda|^{p-1}\le \dis\max_{z\ge 0} \ell(z)\), we have
\((|\lambda|^{p-1}+2\bar z)^\frac{\beta}{p-1}\le\frac{\bar z}{N}\) for some \(\bar z\) and we can proceed as in the previous case to deduce the existence of a solution to problem \eqref{pr:lapl}.
For instance, if  the differential equation is \(x''=\cos (x(t)) x'(t)^4\) then we have  \(p=2\),  \(N=1\) and \(\beta=4\), so that 
\(\dis\max_{z\ge 0} \ell(z)=3/8\), and problem \eqref{pr:lapl} admits solution whenever \(|\lambda|\le 3/8 \).

\smallskip
We point out that when \(\beta>p\), then the classical Nagumo condition, required in all the existence results available in the literature (see, e.g., \cite[Remark 4]{fp}), does not hold.  So, this example shows that Theorem \ref{thm:main1} is new even in the particular case of strictly monotone differential operators.
\end{example}

\medskip

We conclude this section with an example involving a singular operator.
\begin{example}
Consider a problem involving the relativistic operator: 
\beq \label{pr:tanh}
\begin{cases}
\left(\dfrac{ x'(t)}{\sqrt{1-x'(t)^2}}\right)' = g(t) h(x(t),x'(t))   \qquad \text{for a.e. } t \in [0,1], \\
x(0)=0, \ x(1)=\lambda,
\end{cases}\eeq
where $|\lambda|<1$, \(g\in L^1([0,1])\) and \(h\) is a continuous function in \(\R^2\).
In this case, the operator \(\Phi\) is strictly monotone in the interval \(J=(-1,1)\).
Therefore, we set \(M:=\max_{(x,y)\in [-1,1]^2} |h(x,y)|\) and \(\psi(t):=Mg(t)\), we have that assumption \eqref{ip:cor2} is  fulfilled and, as a consequence of Corollary \ref{cor:cor2}, we conclude that problem \eqref{pr:tanh} admits a solution for every $\lambda\in (-1,1)$.
\end{example}

\section{Boundary value problems on a half-line}
\label{sec:half}

In this section we consider b.v.p. on a half-line, that is

\begin{align} \label{PD-semir}
    \begin{cases}
       \left( \Phi(k(t) x'(t) ) \right)' = f(t, x(t) , x'(t) ) \qquad \text{for a.e. } t \in \R^+:=[0,+\infty) \\
       x(0) = \nu_1 , \qquad x(+\infty) = \nu_2.
    \end{cases}
\end{align}

where the same assumptions on $\Phi$, $k$ and $f$ of Section \ref{sec:intro} hold, i.e.
\begin{itemize}
	\item[-] $\Phi : J \to \R $ is a continuous operator, defined in an open interval \(J\) (bounded or unbounded); 
	\item[-] $k:\R^+ \to \mathbb{R}$ is  a measurable function such that $k(t) >0$ for a.e. $t>0$;
	\item[-] $f: \R^+\times \mathbb{R}^2 \to \mathbb{R}$ is a Carathéodory function.
\end{itemize}

\smallskip
Furthermore, along this section we will assume 
\beq\label{ip:kL1}
\frac{1}{k}\in L^1(\R^+).
\eeq

\medskip
We obtain the existence of a solution to \eqref{PD-semir} by means of a limit process considering the b.v.p. \eqref{PD} on the sequence of intervals \([0,n]\), as \(n \to +\infty\). In what follows we consider these quantities: 
\beq k_t:=\left\|\frac{1}{k}\right\|_{L^1([0,t])} \quad k_{\infty}:=\left\|\frac{1}{k}\right\|_{L^1(\R^+)} \quad s_t^*:=\frac{\nu_2-\nu_1}{k_t} \quad
 s_\infty^*:=\frac{\nu_2-\nu_1}{k_{\infty}}.\label{eq:not}\eeq
Of course, \(k_t\nearrow k_\infty\) and \(|s_t^*|\searrow |s_\infty^*|\) as $t \to +\infty$.

\medskip
The limit process will lead us to a solution of problem \eqref{PD-semir} thanks to the following result.

\medskip
\begin{lemma}\label{lem:limite} Let \eqref{ip:kL1} holds true. 

Let $x_n \in W^{1,1}([0,n])$ be a solution to b.v.p. \eqref{PD} for \(I=[0,n]\). Suppose that there exist a constant $C\in \R$, a compact interval $ \mathcal{K}\subset J $ and a function $\psi\in L^1 (\R
^+)$ such that $\Phi$ is strictly monotone in $ \mathcal{K} $,
\beq \label{ip:estim} |x_n(t)-\nu_1|\le C, \quad k(t)x_n'(t)\in \mathcal{ K} , \text{ for a.e. } t\in [0,n], \, \text{for every }  n\in \N, \eeq
and
\beq \label{ip:unbound1}
	\left|\left( \Phi(k(t)x_n'(t)\right)^\prime\right|\le \psi(t) \ \ \text{ for a.e. } t\in [0,n], \, \text{for every }  n\in \N.
\eeq
Then, problem \eqref{PD-semir} admits a solution \(x\in W_{\text{loc}}^{1,1}([0,+\infty))\), satisfying the same limitations as in \eqref{ip:estim}.
\end{lemma}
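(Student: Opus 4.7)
The plan is to extract a subsequence of $\{x_n\}$ converging on compact subsets of $[0,+\infty)$, identify the limit as a solution of the differential equation, and then use the integrability of $1/k$ to force the boundary condition at infinity. For ease of writing, I view each $x_n$ as defined only on $[0,n]$; the extraction below takes place on compact subintervals of $\R^+$.

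First I would set $M:=\max_{s\in\mathcal{K}}|s|$ and observe that, by \eqref{ip:estim}, $|x_n'(t)|\le M/k(t)$ a.e.~on $[0,n]$. Since $1/k\in L^1(\R^+)$, the family $\{x_n\}$ is uniformly bounded (by \eqref{ip:estim}) and uniformly equicontinuous: for $s<t$,
\[
|x_n(t)-x_n(s)|\le M\int_s^t \frac{\d\tau}{k(\tau)}\xrightarrow[t-s\to 0]{} 0 \text{ uniformly in } n.
\]
Meanwhile, $\{\Phi(kx_n')\}$ is uniformly bounded (by the continuity of $\Phi$ on the compact $\mathcal{K}$) and equicontinuous on $[0,+\infty)$: by \eqref{ip:unbound1} and the absolute continuity of the integral of $\psi\in L^1(\R^+)$,
\[
|\Phi(k(t)x_n'(t))-\Phi(k(s)x_n'(s))|\le\int_s^t\psi(\tau)\,\d\tau.
\]
Applying Arzel\`a--Ascoli on each $[0,N]$ and a diagonal extraction, I may pass to a subsequence (still denoted by the same symbol) along which $x_n\to x$ and $\Phi(kx_n')\to\chi$ uniformly on compacta, for some continuous functions $x,\chi$ on $\R^+$. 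Since $\Phi$ is strictly monotone on $\mathcal{K}$, its inverse is continuous on the compact $\Phi(\mathcal{K})$, so $kx_n'=\Phi^{-1}(\Phi(kx_n'))\to\Phi^{-1}(\chi)$ uniformly on compacta. This gives $x\in W^{1,1}_{\mathrm{loc}}(\R^+)$ with $kx'=\Phi^{-1}(\chi)$ a.e., and in particular $k(t)x'(t)\in\mathcal{K}$ and $|x(t)-\nu_1|\le C$ pass to the limit.

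Next I would pass to the limit in the equation. As $f$ is Carath\'eodory and $|f(t,x_n,x_n')|\le\psi(t)$ by \eqref{ip:unbound1}, dominated convergence yields $f(\cdot,x_n,x_n')\to f(\cdot,x,x')$ in $L^1_{\mathrm{loc}}(\R^+)$. Writing
\[
\Phi(k(t)x_n'(t))=\Phi(k(0)x_n'(0))+\int_0^t f(s,x_n(s),x_n'(s))\,\d s
\]
and passing to the limit shows $\Phi\circ(kx')\in W^{1,1}_{\mathrm{loc}}(\R^+)$ with $(\Phi(kx'))'=f(\cdot,x,x')$ a.e. The condition $x(0)=\nu_1$ is immediate from the uniform convergence on $[0,1]$.

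The hard part will be establishing $x(+\infty)=\nu_2$, since $\nu_2$ is attained by $x_n$ only at the moving endpoint $t=n$. The rescue comes from the integrability of $1/k$: for every $T>0$ and every $n>T$,
\[
|\nu_2-x_n(T)|=\Big|\int_T^n x_n'(s)\,\d s\Big|\le M\int_T^{+\infty}\frac{\d s}{k(s)}.
\]
Letting $n\to+\infty$ along the subsequence gives $|\nu_2-x(T)|\le M\int_T^{+\infty}1/k$ for every $T>0$. The same bound $|x'|\le M/k$ ensures $x$ is Cauchy at infinity, so $\lim_{T\to+\infty}x(T)$ exists, and sending $T\to+\infty$ in the displayed inequality forces this limit to equal $\nu_2$, completing the argument.
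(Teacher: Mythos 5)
Your proof is correct, and it reaches the same conclusion by the same overall strategy as the paper's --- compactness of $\{x_n\}$ and $\{\Phi(kx_n')\}$ from the bounds $|x_n'|\le M/k$ and \eqref{ip:unbound1}, inversion through the continuous partial inverse of $\Phi$ on the compact set $\Phi(\mathcal{K})$, and dominated convergence to pass to the limit in the equation --- but with two genuine differences in the machinery. For compactness, the paper extends $x_n$ (by $\nu_2$) and $w_n:=(\Phi(kx_n'))'$ (by $0$) to all of $\R^+$, invokes the Dunford--Pettis theorem to obtain weak $L^1$ limits $z$ and $w$, and only afterwards upgrades to strong convergence by pointwise arguments; you instead apply Arzel\`a--Ascoli on compacta to $x_n$ and to $\Phi(kx_n')$, which is more elementary and renders the weak limits unnecessary (in the paper they serve essentially as placeholders later identified with the strong limits). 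For the condition at $+\infty$, the paper proves uniform convergence of the extended $x_n$ on the whole half-line and exchanges the limits in $t$ and $n$; your tail estimate $|\nu_2-x_n(T)|\le M\int_T^{+\infty}k(s)^{-1}\,\d s$, sent first in $n$ and then in $T$, reaches the same conclusion more directly and avoids the limit interchange. The only presentational caveats are minor: $k x_n'$, and hence $\Phi(k(\cdot)x_n'(\cdot))$ and the quantity $\Phi(k(0)x_n'(0))$, are a priori defined only a.e., so Arzel\`a--Ascoli and the integrated equation should be applied to the continuous representative of $\Phi\circ(kx_n')\in W^{1,1}([0,n])$, as you implicitly do; and the diagonal extraction on $[0,N]$ should be restricted to indices $n\ge N$.
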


\begin{proof}
	Let $I_n = [0,n]$, with $n \in \mathbb{N}$. 
We extend the functions $x_n$ in $[0,+\infty)$ by defining $x_n(t)=\nu_2$ for every $t\ge n$.
Our aim is to show that the sequence $(x_n)_n$  admits a subsequence uniformly convergent  to a function 
	$x_\infty \in W^{1,1}_{loc}(\mathbb{R}^+)$, which is a solution to \eqref{PD-semir} and satisfies the same limitations as in 
	\eqref{ip:estim}. 

\smallskip

Then, we set
\[w_n(t):= 	\left( \Phi(k(t) x_n'(t)) \right)',\]   extend the functions $w_n$ in $[0,+\infty)$ by defining $w_n(t)=0$ for every $t\ge n$.	
Moreover, we put
$M:=\sup\{|s|:  s\in \mathcal{K}	\}$,
	by the second condition in \eqref{ip:estim} we deduce that 
		\beq\label{eq:xprime} |x_n'(t) | \leq \frac{M}{k(t)} \quad \text{ for a.e. } t \ge 0  \text{ and every } n \in \mathbb{N}. \eeq
 		
	Then, taking also into account \eqref{ip:unbound1}, we can
	  apply the Dunford-Pettis Theorem, to deduce that there exist two functions 
	$z, w \in L^1(\mathbb{R}^+)$
	such that (by eventually passing to a subsequence)
	\begin{align*}
		x_n' \rightharpoonup z \qquad \text{and} \qquad w_n \rightharpoonup w \qquad \text{in } L^1(\mathbb{R}^+).
	\end{align*}
	Hence, for every $t \in \mathbb{R}^+$
	\[
		x_n(t) = x_n(0) + \int_0^t x_n'(s) \, \d s  \xrightarrow[]{}  \nu_1 + \int_0^t z(s) \, \d s =: x_\infty(t) .
	\]
	So, $x_\infty$ is absolutely continuous on $\mathbb{R}^+$ and $x_\infty(0)=  \nu_1$.
	Furthermore, since $
		x_\infty'= z $, we have  $x_\infty' \in L^1(\mathbb{R}^+)$, hence $x_\infty \in W^{1,1}_{\text{loc}}(\mathbb{R}^+)$.
	
	 Now, for every fixed $n \in \N$ we set
	 \begin{align*}
	 	\xi_n (t):=k(t) x_n'(t), \qquad t \in I_n,
	\end{align*}
	and  observe that by \eqref{ip:estim} we have $\xi_n(t)\in   \mathcal{K}$, for a.e. $t\in I_n$ and every $n\in\N$. So, the sequence $(\xi_n(0))_n$ admits a subsequence (labeled again $(\xi_n(0))_n$) converging to some $\xi^*\in \mathcal{K}$.

Now,
	for a.e. every $t \in \mathbb{R}^+ $ there exists $n=n(t) \in \mathbb{N}$ such that 
	for every $n \geq n(t)$ we have
	\begin{align*}
		\Phi( k(t) x_n'(t) ) = \Phi( \xi_n(t) ) = \Phi ( \xi_n(0)) + \int_0^t w_n(s) \, \d s .
	\end{align*}
	Then, since $w_n \rightharpoonup w \in L^1(\mathbb{R}^+)$, we also have
	\begin{align}  \label{eq:V}
		\Phi(k(t) x_n'(t) ) \xrightarrow[]{}  \Phi (\xi^*) + \int_0^t w(s) \d s \in \Phi(	 \mathcal{K}), \quad \text{ for a.e. } t\ge 0.
	\end{align}
	Hence, because of the continuity of the partial inverse $\Phi^{-1}$, defined in the compact $\Phi(\mathcal{K})$, 
	for a.e. $t \ge 0$ we have
	\begin{align*} 
		k(t) x_n'(t) \xrightarrow[]{}  \Phi^{-1} \left(  \Phi (\xi^* ) + \int_0^t w(s) \, ds \right) := v(t).
	\end{align*}
Of course, $v \in C(\mathbb{R}^+)$ and $\Phi \circ v$ is 
	absolutely continuous in $\mathbb{R}^+$. Furthermore, $(\Phi \circ v )' = w \in L^1(\mathbb{R}^+)$.
	Additionally, \eqref{eq:V} implies 
	\begin{align*}
		 x_n'(t) \xrightarrow[]{} \frac{v(t)}{k(t)}  \qquad \text{a.e. in } \mathbb{R}^+.
	\end{align*}
	Then, by \eqref{eq:xprime}, we can apply the Dominated Convergence Theorem and obtain
	\begin{align*}
		x_n' \stackrel{L^1(\mathbb{R}^+)}{\longrightarrow}   \frac{v}{k},
	\end{align*}
	that coupled with the fact that $x_n' \rightharpoonup z \in L^1(I)$, implies that
	\begin{align*}
		x_\infty '(t)= z(t) = \frac{v(t)}{k(t)} \qquad \text{a.e. in } \mathbb{R}^+,
	\end{align*}
	 so, $x_\infty\in W^{1,1}_{\text{loc}}(\mathbb{R}^+)$, $x_n'(t)\to x_\infty'(t)$ a.e. and  by \eqref{eq:V} we have
\[\left(\Phi(k(t)x_\infty'(t))\right)^\prime= 	  w(t) \quad \text{ for a.e. } t\ge 0. \] 
	 
Now, given that 
\[w_n(t) = \left( \Phi ( k(t) x_n'(t) )\right)' = f(t, x_n(t), x_n'(t) ) \quad \text{for a.e. } t>0,
	\]
and $f$ is  a Carathéodory function, we get
\begin{align*}
	 w_n(t) \xrightarrow[]{} f(t, x_\infty(t), x_\infty'(t) ) \qquad \text{for a.e. } t \in \mathbb{R}^+. 
\end{align*}
Moreover, by \eqref{ip:unbound1} we can apply the Dominated Convergence Theorem, to deduce
\begin{align*}
	 w_n(\cdot) \stackrel{L^1(\mathbb{R}^+)}{\longrightarrow}  f(\cdot, x_\infty(\cdot), x_\infty'(\cdot) ) 
\end{align*}
that combined with the fact that $w_n \rightharpoonup w = \left( \Phi ( k \circ x_\infty' ) \right) ' $  guarantees that
\begin{align*}
	\left( \Phi \left( k(t)  x_\infty'(t) \right) \right)' =  f(t, x_\infty(t), x_\infty'(t) ) 	 \qquad \text{a.e. in } \mathbb{R}^+.
\end{align*}
This implies $x_\infty$ is a solution of the equation in \eqref{PD-semir}.	

As far as we are concerned with boundary conditions, we already have that $x_\infty(0)=\nu_1$, since $x_n(0)=\nu_1$ for every $n \in \mathbb{N}$. Moreover, since  
\begin{align*}
	\sup \limits_{t\in\mathbb{R}^+} | x_n(t) - x_\infty(t)| \leq \| x_n' - x_\infty' \|_{L^1(\mathbb{R}^+)},
\end{align*}
and $x_n' \to x_\infty'$ in $L^1(\mathbb{R}^+)$, it follows that
 $x_n \to x_\infty$ uniformly in $\mathbb{R}^+$. Then, 
\begin{align*}
	\lim \limits_{t \to + \infty} x_\infty(t) = \lim \limits_{t \to + \infty} \left( \lim \limits_{n \to + \infty} x_n(t) \right) = \nu_2.
\end{align*}
\end{proof}

\medskip

In the present context of a b.v.p. on a half-line,  we assume that \(\Phi\) is locally strictly monotone at \(s_\infty^*\), that is there exists an open interval \(J^*\subseteq J\) such that
\beq \label{ip:monot0}
 s_\infty^*\in J^*  \quad \text{ and } \quad \Phi \text{ is strictly monotone in } J^*.
\eeq
In the sequel \(\Phi^{-1}\) stands for the partial inverse of \(\Phi\), defined in \(\Phi(J^*)\) and taking values in \(J^*\).

\begin{theorem} \label{th:semir}
Let  \eqref{ip:kL1} and \eqref{ip:monot0}  be satisfied.

Assume that \(\Phi\) is locally Lipschitz at \(s_\infty^*\), that is there exists positive constants \(L,\delta\) such that

\beq \label{ip:lip}
|\Phi(s)-\Phi(s_\infty^*)| \le L|s-s_\infty^*| \quad \text{ whenever } |s-s_\infty^*|<\delta. 
\eeq

Suppose that there exists a function \(\psi\in L^1(\R^+)\) satisfying the following conditions:  
 \beq \label{ip:bound2}        
   \Phi(s_\infty^*) \pm  2 \| \psi \|_{L^1(\R^+)} \in \Phi(J^*),
  \eeq   
\beq \label{ip:psiH}
\text{there exists } \lim_{t \to +\infty} \psi(t)k(t)=:M > L\frac{|\nu_2-\nu_1|}{2k_\infty^2};
\eeq  
and finally
\beq \label{ip:norm2}
    | f(t,x,y(t)) | \leq \psi(t) \qquad \text{a.e. in } I, \eeq
for every  $x\in \R$ and $y\in   L^1(\R^+)$ such that:
\beq \label{ip:stimax2} 
\frac{x-\nu_1}{k_\infty}\in J^* \quad \text{and} \quad
\left|\Phi\left(\frac{x-\nu_1}{k_\infty}\right)- \Phi(s_\infty^*)\right|\le 2 \| \psi \|_{L^1(\R^+)} ,
\eeq
\beq \label{ip:stimay2} 
k(t)y(t)\in J^* \quad \text{and} \quad \left|\Phi(k(t)y(t))- \Phi(s_\infty^*)\right|<2 \| \psi \|_{L^1(\R^+)} 
\quad \text{for a.e. } t \in I.
\eeq

Then, problem \eqref{PD-semir} admits a solution.

\end{theorem}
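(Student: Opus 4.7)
The natural strategy is a truncation-and-limit procedure: for each $n\in\N$ sufficiently large, I would apply Theorem \ref{thm:main1} on the finite interval $[0,n]$ with boundary conditions $x(0)=\nu_1$, $x(n)=\nu_2$, obtaining a solution $x_n\in W^{1,1}([0,n])$ (since $1/k\in L^1(\R^+)$ forces $p=1$), and then invoke Lemma \ref{lem:limite} to pass to the limit. Recall that $s_n^*=(\nu_2-\nu_1)/k_n\to s_\infty^*$ and $\|\psi\|_{L^1([0,n])}\to \|\psi\|_{L^1(\R^+)}$, so that the $n$-level analogue of \eqref{ip:bound}, namely $\Phi(s_n^*)\pm 2\|\psi\|_{L^1([0,n])}\in \Phi(J^*)$, holds for all $n\ge n_0$ by continuity, since $J^*$ is open and \eqref{ip:bound2} is strict.

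\textbf{Key step: bounding $f$ at the $n$-level.} To invoke Theorem \ref{thm:main1} on $[0,n]$, I must verify \eqref{ip:norm} with respect to the tolerance conditions \eqref{ip:stimax}--\eqref{ip:stimay} formulated at the $n$-level. The hypothesis \eqref{ip:norm2} of the present theorem gives such a bound only under the $\infty$-level tolerance \eqref{ip:stimax2}--\eqref{ip:stimay2}, so the $n$-level conditions must be shown to imply the $\infty$-level ones. For the derivative condition, the triangle inequality gives
\begin{align*}
|\Phi(k(t)y(t)) - \Phi(s_\infty^*)| \le |\Phi(k(t)y(t)) - \Phi(s_n^*)| + |\Phi(s_n^*) - \Phi(s_\infty^*)|,
\end{align*}
so the $n$-level bound $<2\|\psi\|_{L^1([0,n])}$ on the first summand upgrades to the $\infty$-level bound $\le 2\|\psi\|_{L^1(\R^+)}$ provided
\begin{align*}
|\Phi(s_n^*) - \Phi(s_\infty^*)| \le 2\int_n^{+\infty}\psi(s)\,\d s.
\end{align*}
The Lipschitz hypothesis \eqref{ip:lip} gives, for $n$ large,
\begin{align*}
|\Phi(s_n^*) - \Phi(s_\infty^*)| \le L\,|s_n^* - s_\infty^*| = \frac{L\,|\nu_2-\nu_1|}{k_n k_\infty}\int_n^{+\infty}\frac{\d s}{k(s)},
\end{align*}
while \eqref{ip:psiH} yields, for every $\ep>0$ and $n$ large,
\begin{align*}
\int_n^{+\infty}\psi(s)\,\d s \ge (M-\ep)\int_n^{+\infty}\frac{\d s}{k(s)}.
\end{align*}
Hence the required domination reduces to $L|\nu_2-\nu_1|/(k_n k_\infty)\le 2(M-\ep)$, which in the limit $n\to +\infty$ becomes $L|\nu_2-\nu_1|/k_\infty^2 \le 2(M-\ep)$ --- exactly what is ensured by the strict inequality $M > L|\nu_2-\nu_1|/(2k_\infty^2)$ in \eqref{ip:psiH}, for $\ep$ small enough. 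The position condition is handled similarly, using also $k_n\to k_\infty$ and the uniform bound on the relevant range in $x$.

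\textbf{Passing to the limit.} Once Theorem \ref{thm:main1} is applied on $[0,n]$ for $n\ge n_0$, we obtain solutions $x_n$ satisfying $k(t)x_n'(t)\in \mathcal{K}$ for a compact $\mathcal{K}\subset J^*$ independent of $n$ (a slight enlargement of $\Phi^{-1}([\Phi(s_\infty^*)-2\|\psi\|_{L^1(\R^+)},\Phi(s_\infty^*)+2\|\psi\|_{L^1(\R^+)}])$). This gives $|x_n'(t)|\le C/k(t)$, hence $|x_n(t)-\nu_1|\le Ck_\infty$ uniformly in $n$, and $|(\Phi(kx_n'))'|=|f(t,x_n,x_n')|\le \psi(t)$ by construction. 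All the hypotheses of Lemma \ref{lem:limite} are then in force (after trivially reindexing from $n_0$), producing the desired solution $x\in W^{1,1}_{\mathrm{loc}}(\R^+)$ to \eqref{PD-semir}. The main obstacle is precisely the asymptotic balance of the second paragraph: dominating the Lipschitz-linear error $|\Phi(s_n^*)-\Phi(s_\infty^*)|$ by the tail $2\int_n^{+\infty}\psi$ using \eqref{ip:psiH}, where the strict inequality between $M$ and the critical threshold $L|\nu_2-\nu_1|/(2k_\infty^2)$ is exactly the right quantitative condition.
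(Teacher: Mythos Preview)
Your proposal is correct and follows essentially the same route as the paper: verify the hypotheses of Theorem \ref{thm:main1} on $[0,n]$ for large $n$ by showing that the $n$-level constraint windows are contained in the $\infty$-level ones, then invoke Lemma \ref{lem:limite}. The crucial inequality $|\Phi(s_n^*)-\Phi(s_\infty^*)|\le 2\int_n^{+\infty}\psi$ is obtained in the paper by computing $\lim_{t\to+\infty}\frac{\ell_\infty-\ell_t}{k_\infty-k_t}=M$ and then comparing with $L|\nu_2-\nu_1|/(k_t k_\infty)$, which is equivalent to your direct integration of $\psi(t)\ge (M-\eps)/k(t)$; the choice of $\mathcal K=[A_\infty^*,B_\infty^*]$ (no enlargement needed, since the $n$-level windows nest inside it) and the appeal to Lemma \ref{lem:limite} are identical.
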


\begin{proof}
We provide the proof for \(\Phi\) locally strictly increasing at \(s_\infty^*\), since when \(\Phi\) is decreasing it is sufficient to change the sign to both the terms of the differential equation.

First of all, notice that by \eqref{ip:bound2}  and the continuity of \(\Phi\),  there exists  \(\tau_1\) such that  for every 
\(t>\tau_1\) we have:  \(s_t^*\in J^*\), $|s_t^*-s_\infty^*|<\delta$ (see \eqref{eq:not}), 
and
\begin{equation*} 
\Phi(s_t^*) \pm 2 \|\psi\|_{L^1(\R^+)} \in \Phi(J^*).
\end{equation*}

Of course, since $\|\psi\|_{L^1([0,t])}\le \|\psi\|_{L^1(\R^+)}$, 
assumption \eqref{ip:bound2} also implies the validity of \eqref{ip:bound} for \(I=[0,t]\), for every \(t\ge \tau_1\).

\smallskip 
From now on, let us denote 
\[\ell_\infty:=\|\psi\|_{L^1(\R^+)} \quad \text{ and } \quad
\ell_t:=\|\psi\|_{L^1([0,t])}, \ t\ge \tau_1.\]

 Observe that \(\ell_t \nearrow \ell_\infty\) and by \eqref{ip:psiH} we have 
 \[ \lim_{t \to +\infty} \frac{\ell_\infty -\ell_t}{k_\infty-k_t} =
 \lim_{t \to +\infty} \frac{\int_t^{+\infty} \psi(\xi)\d \xi }{\int_t^{+\infty} \frac{1}{k(\xi)} \d \xi}= \lim_{t \to +\infty} \psi(t)k(t) =M.\]
 Let \(\eps^*\) be such that  \(M-\eps^*>L\dfrac{|\nu_2-\nu_1|}{2k_\infty^2}\).
Then,  there exists a value \(\tau_2\ge \tau_1\) such that 
\beq \label{eq:c1}
2\frac{\ell_\infty-\ell_t}{k_\infty-k_t} > 
 L\dfrac{|\nu_2-\nu_1|}{k_\infty^2} +\eps^* \quad \text{ for every } t>\tau_2.
\eeq
Moreover, since \(\frac{k_\infty}{k_t} \to 1\), there exists a value \(\tau_3\ge \tau_2\) such that 
\[
L|\nu_2-\nu_1|\frac{k_\infty}{k_t} <  L|\nu_2-\nu_1| +\eps^*k_\infty^2 \quad \text{ for every } t\ge \tau_3.
\]
Hence, 
\beq \label{eq:c2} L\frac{|\nu_2-\nu_1|}{k_\infty^2} > L\frac{|\nu_2-\nu_1|}{k_t k_\infty} -\eps^* \quad \text{ for every } t\ge \tau_3.\eeq
Thus, by \eqref{eq:c1} and \eqref{eq:c2} we deduce
\[
2\frac{\ell_\infty-\ell_t}{k_\infty-k_t} > 
 L\dfrac{|\nu_2-\nu_1|}{k_tk_\infty}  \quad \text{ for every } t>\tau_3.
\]
 implying by \eqref{ip:lip}
 \[ 2(\ell_\infty-\ell_t)> L|\nu_2-\nu_1| \left( \frac{1}{k_t}-\frac{1}{k_\infty} \right) =  L|s_t^*- s_\infty^*|\ge  |\Phi(s_t^*) - \Phi(s_\infty^*)|\quad \text{ for every } t>\tau_3.
 \]
 
Therefore, we have
\[ \Phi(s_\infty^*)-2\ell_\infty \le \Phi(s_t^*)-2\ell_t \le \Phi(s_t^*)+2\ell_t \le \Phi(s_\infty^*)+2\ell_\infty \]
and then by \eqref{ip:stimay2}
\[ 
 \left|\Phi(k(t)y(t))- \Phi(s_t^*)\right|<2 \ell_t \quad \Rightarrow  
 \quad \left|\Phi(k(t)y(t))- \Phi(s_\infty^*)\right|<2 \ell_\infty.
 \]
Moreover,
\[ 
\left|\Phi\left(\frac{x-\nu_1}{k_t}\right)- \Phi(s_t^*)\right|<2 \ell_t\ \quad \Rightarrow  
 \quad
\left|\Phi\left(\frac{x-\nu_1}{k_\infty}\right)- \Phi(s_\infty^*)\right|\le 2 \ell_\infty.\]
So, for every  \(n\ge \tau_3\)  the validity of \eqref{ip:norm2} for every $x\in \R$ and $y\in L^P(\R^+)$ satisfying \eqref{ip:stimax2} and \eqref{ip:stimay2}  implies  
the validity of \eqref{ip:norm} under condition \eqref{ip:stimax} and \eqref{ip:stimay} for $I=[0,n]$, $s^*=s_n^*$. Hence,
for every fixed \(n\ge \tau_3\) we have that  all the assumptions of Theorem \ref{thm:main1} are fulfilled (for \(p=1\)). As a consequence, we infer that 
 the boundary value problem
\eqref{PD} for \(I=[0,n]\), \(n\ge \tau_3\), admits 
 a solution \(x_n\in W^{1,1}([0,n])\)  satisfying \eqref{ip:stimax2} (with $x$ replaced by $x(t)$)  and \eqref{ip:stimay2} (with  $y(t)$ replaced by $x'(t)$).

Hence, put
$A_\infty^*:=\Phi^{-1}(\Phi(s_\infty^*)-2\ell_\infty)$, $B_\infty^*:=\Phi^{-1}(\Phi(s_\infty^*)+2\ell_\infty)$, 
 $C:=   k_{\infty}( |A_\infty^* |+|B_\infty^* |)$, $\mathcal{K}:=[A_\infty^*,B_\infty^* ]$, 
the assertion follows from Lemma \ref{lem:limite}.

\end{proof}

\bigskip
In the following existence result we do not require assumptions \eqref{ip:lip} and \eqref{ip:psiH}, provided that $\Phi$ is odd and supposing a strengthening of 
the other assumptions, requiring they are satisfied replacing \(s_\infty^*\) with \(s_T^*\), for a certain \(T>0\).

\begin{theorem}
\label{t:semir2}
Assume that \eqref{ip:kL1} and \eqref{ip:monot0} are satisfied. Moreover, suppose that $\Phi$ is odd in the (symmetric) 
interval $J^*$ and there exist a value $T>0$ such that $s_T^*\in J^*$ (see \eqref{eq:not}).

 Finally, assume that there exists a function  $\psi\in L^1(\R^+)$ such that 
 \beq \label{ip:bound3}        
   \Phi(s_T^*) \pm  2 \| \psi \|_{L^1(\R^+)} \in \Phi(J^*),
  \eeq    
 and
 \beq \label{ip:norm3}
    | f(t,x,y(t)) | \leq \psi(t) \qquad \text{a.e. in } \R^+, \eeq
 for every $x \in \R$ and $y\in L^1(\R^+)$ such that
\begin{align}
	\label{ip:stimax_T}
	|x|&<|\nu_1|+k_\infty\Phi^{-1}(\Phi(|s_T^*|) +2\|\psi\|_{L^1(\R^+)}); \\
	 \label{ip:stimay_T}
	|y(t)|&<\frac{1}{k(t)}\Phi^{-1}(\Phi(|s_T^*|) +2\|\psi\|_{L^1(\R^+)}) \qquad \text{for a.e. } t \in I.
\end{align}

Then, problem \eqref{PD-semir} admits a solution.
\end{theorem}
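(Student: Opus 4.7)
The strategy is to apply Theorem \ref{thm:main1} on each finite interval $[0,n]$ for $n$ sufficiently large, and then to pass to the limit by means of Lemma \ref{lem:limite}. The decisive role of oddness here is that it replaces the delicate asymptotic matching between $s_t^*$ and $s_\infty^*$ needed in Theorem \ref{th:semir} (via \eqref{ip:lip} and \eqref{ip:psiH}) with the elementary uniform bound $|s_n^*|\le |s_T^*|$, valid for every $n\ge T$.

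First I would note that $k_n := \|1/k\|_{L^1([0,n])}$ is nondecreasing with $k_n\nearrow k_\infty$, so $|s_n^*|=|\nu_2-\nu_1|/k_n$ is nonincreasing, and consequently $s_n^* \in [-|s_T^*|,|s_T^*|] \subset J^*$ for every $n\ge T$, since $J^*$ is symmetric and contains $s_T^*$. Assuming without loss of generality that $\Phi$ is strictly increasing on $J^*$, oddness gives
\begin{equation*}
\Phi(|s_n^*|)+2\|\psi\|_{L^1([0,n])}\;\le\;\Phi(|s_T^*|)+2\|\psi\|_{L^1(\R^+)},
\end{equation*}
together with the analogous lower bound, so \eqref{ip:bound3} implies condition \eqref{ip:bound} of Theorem \ref{thm:main1} for the Dirichlet problem on $[0,n]$, with the same $J^*$.

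Next, invoking Remark \ref{rem:odd} I would rewrite \eqref{ip:stimax}--\eqref{ip:stimay} on $I=[0,n]$ in the equivalent odd form
\begin{align*}
|x|&<|\nu_1|+k_n\,\Phi^{-1}\!\bigl(\Phi(|s_n^*|)+2\|\psi\|_{L^1([0,n])}\bigr),\\
|y(t)|&<\frac{1}{k(t)}\,\Phi^{-1}\!\bigl(\Phi(|s_n^*|)+2\|\psi\|_{L^1([0,n])}\bigr).
\end{align*}
Since $k_n\le k_\infty$ and the right-hand sides are dominated by those in \eqref{ip:stimax_T}--\eqref{ip:stimay_T}, every admissible pair $(x,y)$ for the truncated problem is also admissible for the hypothesis \eqref{ip:norm3}, hence $|f(t,x,y(t))|\le\psi(t)$, which is exactly \eqref{ip:norm} on $[0,n]$. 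Theorem \ref{thm:main1} (with $p=1$) then yields a solution $x_n\in W^{1,1}([0,n])$ of \eqref{PD} on $[0,n]$, and the same Remark \ref{rem:odd} applied to the conclusion gives the uniform bounds
\begin{equation*}
|x_n(t)-\nu_1|\le k_\infty R,\qquad k(t)x_n'(t)\in[-R,R]=:\mathcal{K},
\end{equation*}
where $R:=\Phi^{-1}\!\bigl(\Phi(|s_T^*|)+2\|\psi\|_{L^1(\R^+)}\bigr)$; by \eqref{ip:bound3}, $\mathcal{K}$ is a compact subinterval of $J^*$ on which $\Phi$ is strictly monotone.

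Finally, because $|(\Phi(k(t)x_n'(t)))'|=|f(t,x_n(t),x_n'(t))|\le\psi(t)$ by the bound just established, the sequence $(x_n)$ fulfils all hypotheses of Lemma \ref{lem:limite} with $C:=k_\infty R$, $\mathcal{K}$ as above, and the given $\psi$; the lemma then delivers the desired solution of \eqref{PD-semir}. The main obstacle is essentially bookkeeping: one must keep track of how the truncated quantities $k_n$, $|s_n^*|$ and $\|\psi\|_{L^1([0,n])}$ compare with their $n=\infty$ counterparts so that the admissible sets on $[0,n]$ are contained in the one prescribed by \eqref{ip:stimax_T}--\eqref{ip:stimay_T}. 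Oddness is precisely what allows a single compact $\mathcal{K}$ to work uniformly in $n$, bypassing the Lipschitz and limit conditions of Theorem \ref{th:semir}.
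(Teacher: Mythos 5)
Your proposal is correct and follows essentially the same route as the paper: verify \eqref{ip:bound} and \eqref{ip:norm} on each $[0,n]$ via the monotonicity $|s_n^*|\le|s_T^*|$, $\|\psi\|_{L^1([0,n])}\le\|\psi\|_{L^1(\R^+)}$ and Remark \ref{rem:odd}, apply Theorem \ref{thm:main1} with $p=1$, and pass to the limit with Lemma \ref{lem:limite} using exactly the same $C$ and $\mathcal{K}$. The only cosmetic difference is that you exploit the symmetry of $\Phi(J^*)$ to treat both signs of $\nu_2-\nu_1$ at once, where the paper splits into the two cases explicitly.
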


\begin{proof} As usual, we give the proof in the case $\Phi$ is increasing in $J^*$.

\noindent
First notice that for every $n>T$ large enough we have $s_n^*\in J^*$ and $s_n^*\le s_T^*$. Moreover, put $I_n:=[0,n]$, assumption  \eqref{ip:bound3} guarantees that \eqref{ip:bound} is fulfilled for $I=I_n$.
In fact, if we set $\ell_\infty:=  \| \psi \|_{L^1(\R^+)}$, if $\nu_2-\nu_1\ge  0$ we have
\[
	\inf J^*< \Phi(s_\infty^*) \pm 2\ell_\infty \le  \Phi(s_n^*) \pm 2\ell_\infty\le  \Phi(s_T^*)\pm  2\ell_\infty<\sup J^*.
\]

Instead, if $\nu_2-\nu_1<  0$ we have
\[
	\inf J^*< \Phi(s_T^*) \pm 2\ell_\infty <  \Phi(s_n^*)\pm 2\ell_\infty< \Phi(s_\infty^*)\pm 2\ell_\infty<\sup J^*.
\]

Finally, since $s_n^*\le s_T^*$, we have
\[\Phi^{-1}(\Phi(|s_n^*|) + 2 \|\psi\|_{L^1([0,n]}) \le 
 \Phi^{-1}(\Phi(|s_T^*|) + 2 \ell_\infty).\]

Therefore, the validity of \eqref{ip:norm3} under conditions \eqref{ip:stimax_T} and \eqref{ip:stimay_T} guarantees
the validity of \eqref{ip:norm} under conditions \eqref{ip:stimax_odd} and \eqref{ip:stimay_odd} (see Remark \ref{rem:odd}) for $I=I_n$. So,
   all the assumptions of Theorem \ref{thm:main1} are fulfilled for \(p=1\) and \(s^*=s_n^*\). As a consequence, 
 the boundary value problem
\eqref{PD} for \(I=[0,n]\) admits 
 a solution \(x_n\in W^{1,1}([0,n])\)  satisfying
\eqref{ip:estim} and \eqref{ip:unbound1} 
for $C:=  k_{\infty}\Phi^{-1}(\Phi(|s_T^*|) + 2 \ell_\infty)$, $\mathcal{K}:=[-\Phi^{-1}(\Phi(|s_T^*|) + 2 \ell_\infty),\Phi^{-1}(\Phi(|s_T^*|) + 2 \ell_\infty)]$, and then the assertion follows from Lemma \ref{lem:limite}. 
\end{proof}
\bigskip

We conclude the paper by providing some simple examples of b.v.p. on \([0,+\infty)\) where Theorems \ref{th:semir} and \ref{t:semir2} can be usefully applied.

\begin{example}
Consider the following problem
\beq \label{pr:ex1s} \begin{cases}((1+t^2)x'(t))' = t^2 g(x(t)) (x'(t))^3 \\
x(0)=0, \ x(+\infty)=\lambda. \end{cases}\eeq
where $g$ is a continuous bounded function, with \(\|g\|_{L^\infty(\R)}\le 1\). 

\smallskip
In this case \(\Phi(s)=s\), \(k(t)=1+t^2\). Then, assumptions  \eqref{ip:monot0}  and \eqref{ip:lip}  are trivially satisfied, as is \eqref{ip:kL1}, with
\[  L=1, \ J^*=\R,\ \Phi(J^*)=\R,\ k_{\infty} = \frac\pi{2}, \ s_\infty^*=\frac{2\lambda}{\pi}.\]

 Take the family of functions \(\psi_r(t):= r \min\{1,\frac{1}{t^2}\}\). Of course, \(\psi_r\in L^1(\R^+)\), with \(\|\psi_r\|_{L^1(\R^+)}=2r\), and assumption \eqref{ip:bound2} is trivially satisfied.
Moreover, assumption \eqref{ip:psiH} holds whenever \(|\lambda|<\frac{r\pi^2}{2}\).

\smallskip
Finally, put $r_0:=(\pi+4)^{-\frac32}$,  notice that if $y\in L^1(\R^+)$ satisfies
\[ k(t)|y(t)|\le |s_\infty^*|+ 2\|\psi_{r_0}\|_{L^1(\R^+)} = \frac{2|\lambda|}{\pi} +4r_0 \quad \text{a.e. } t\ge 0,
 \]
and \(|\lambda|<\frac{r_0\pi^2}{2}\), then we have
\begin{align*}|t^2 g(x)(y(t))^3|&\le  \frac{t^2}{(t^2+1)^3}\left(\frac{2|\lambda|}{\pi} +4r_0  \right)^3< \frac{t^2}{(t^2+1)^3} r_0^3 (\pi+4)^3 \\ & =\frac{t^2}{(t^2+1)^3} r_0  <\psi_{r_0}(t) \quad \text{a.e. } t\ge 0. \end{align*}
Therefore, \eqref{ip:norm2} is satisfied with $\psi=\psi_{r_0}$, 
for every $x\in \R$ and every $y\in L^1(\R^+)$ 
satisfying \eqref{ip:stimay2}.
Hence,  Theorem \ref{th:semir} ensures the existence of a solution to \eqref{pr:ex1s} for every \(|\lambda|<\frac{r_0\pi^2}{2}\).

\end{example}

The next example shows the utility of Theorem \ref{t:semir2}.

\begin{example}
Consider the following problem
\beq \label{pr:ex2s} \begin{cases}\left(\Phi((1+t^2)x'(t))\right)' = e^{-t} \arctan(x(t)x'(t)) \\
x(0)=0, \ x(+\infty)=\lambda \end{cases}\eeq 
where $\Phi$ is an odd operator, strictly monotone in a symmetric interval $J^*$. 

Notice that if we set $\psi(t):=\frac{\pi}{2} e^{-t}$, then assumption \eqref{ip:norm3} is trivially satisfied for every $x \in \R$ and $y\in L^1(\R^+)$. Therefore, Theorem \ref{t:semir2} can be applied provided that for some $T>0$ we have  $s_T^*\in J^*$ and \eqref{ip:bound3} holds. Hence, if $\Phi:\R \to \R$ is an increasing surjective operator, problem \eqref{pr:ex2s} admits solution for every $\lambda \in \R$. 

Instead, if $\Phi(J^*)=\R$ but $J^*\ne \R$  (as in the relativistic operator), it suffices to assume that  $s_\infty^*= \frac{2\lambda}{\pi}\in 
J^*$, since this implies that there exists $T>0$ such that $s_T^*\in J^*$ and the problem admits a solution.

Finally, if none of the above situations occur, we have to search for a suitable value $T$. For instance, put $T=1$, we have $s_T^*=\frac{4\lambda}{\pi}$. Then, if $\frac{4\lambda}{\pi}\in J^*$ 
and
\[\Phi \left(\frac{4\lambda}{\pi} \right)\pm \pi \in \Phi(J^*),\]
problem \eqref{pr:ex2s} has a solution.

\smallskip
Notice that for problem \eqref{pr:ex2s}, thanks to the boundedness of the right-hand side with respect to $x'$, Theorem \ref{t:semir2} provides a better result than Theorem \ref{th:semir}, since in order to have \eqref{ip:psiH} we have to take a function $\psi$ which is not the best dominating function, and this reduces the range of the values $\lambda$ for which the existence of a solution is guaranteed.

\end{example} 

\section*{Acknowledgments}

The authors are member of the {\sl Gruppo Nazionale per l'Analisi Matematica, la Probabilit\`a e le loro Applicazioni}
(GNAMPA) of the {\sl Istituto Nazionale di Alta Matematica} (INdAM). 

The first author is partially supported by the INdAM-GNAMPA project "Problemi variazionali: teoria cinetica, domini magnetici e non uniforme ellitticit\'a" CUP$\_$E5324001950001.

This paper is supported by the research project of MIUR (Italian Ministry of Education, University and Research) PRIN 2022- Progetti di Ricerca di rilevante Interesse Nazionale  {\it Nonlinear differential problems with applications to real phenomena} (Grant No. 2022ZXZTN2, CUP$\_$ I53D23002470006)


\begin{thebibliography}{99}
\bibitem{An} F. Anceschi: Existence results for singular strongly nonlinear integro-differential BVPs
on the half line. J. Fixed Point Theory Appl. {\bf 26} (2024) Art. No. 10. https://doi.org/10.
1007/s11784-024-01097-9

\bibitem{BM} C. Bereanu, J. Mawhin: Boundary-value problems with non-surjective $\Phi$-Laplacian and
one–sided bounded nonlinearity. Adv. Differential Equ.  {\bf 11} (2006), 35--60. https://doi.org/
10.57262/ade/1355867723

\bibitem{BM2} C. Bereanu, J. Mawhin: Nonhomogeneous boundary value problems for some nonlinear
equations with singular $\Phi$-Laplacian. J. Math. Anal. Appl. {\bf 352} (2009), 218--233. https://
doi.org/10.1016/j.jmaa.2008.04.025

\bibitem{BCMP} S. Biagi, A. Calamai, C. Marcelli and F. Papalini: Boundary value problems associated
	with singular strongly nonlinear equations with functional terms. Adv. Nonlinear Anal.
	10 (2021), 684–706. DOI: https//doi.org/10.1515/anona-2020-0131
	

\bibitem{BRR} G. Bognár, M. Rontó, N. Rajabov: On initial value problems related to p-Laplacian
and pseudo-Laplacian. Acta Math. Hungar. {\bf 108} (2005), 1–12. https://doi.org/10.1007/
s10474-005-0203-1

\bibitem{CaPo} A. Cabada, R.L. Pouso: 
	Existence results for the problem $(\Phi(u'))'= f(t,u,u')$ with periodic and Neumann boundary conditions, 
	Proceedings of the Second World Congress
	of Nonlinear Analysts, Part 3 (Athens, 1996). Nonlinear Anal. {\bf 30} (1997), no. 3, 1733–
	1742. DOI: https//doi.org/10.1016/S0362-546X(97)00249-6
	
\bibitem{Ca} A. Cabada: An overview of the lower and upper solutions method with nonlinear
	boundary value conditions. Bound. Value Probe. {\bf 2011}, Art. ID 893753, 18 pp. DOI:
	https//doi.org/10.1155/2011/893753
	
\bibitem{CMP} A. Calamai, C. Marcelli, F. Papalini: Boundary value problems for singular second
order equations. J. Fixed Point Theory Appl. {\bf 20} (2018), 22 pp. https://doi.org/10.1186/
s13663-018-0645-0

\bibitem{KFA} N. El Khattabi, M. Frigon, N. Ayyadi: Multiple solutions of boundary value problems
	with $\Phi$-Laplacian operators and under a Wintner-Nagumo growth condition.
	Bound. Value Probl. {\bf 2013} (2013), 2013:236, 1--21. 
	DOI: https//doi.org/10.1186/1687-2770-2013-236

\bibitem{EV} Esteban, J. Vázquez: On the equation of turbulent filtration in one-dimensional porous
media, Nonlinear Anal. {\bf 10} (1986) 1303–1325. https://doi.org/10.1016/0362-546X(86)
90068-4

\bibitem{fp} L. Ferracuti, F. Papalini: Boundary value problems for strongly non-linear multivalued equations involving different $Phi$-laplacians, Adv. Diff. Eq. {\bf 14:5-6} (2009), 541-566 https://doi.org/10.57262/ade/1355867259
	
\bibitem{FMP} L. Ferracuti, C. Marcelli, F. Papalini: Boundary value problems for highly nonlinear
inclusions governed by non-surjective $\Phi$-Laplacians. Set-Valued Var. Anal. {\bf 19} (2011), 1--21.
https://doi.org/10.1007/s11228-009-0127-y

\bibitem{GazPap}
    L. Gasinski, N. S. Papageorgiou: Nonlinear Analysis. 
    Series in Mathematical Analysis and Applications, vol.9. Chapman \& Hall/CRC, BocaRaton (2006), https://doi.org/10.1201/9781420035049 
    
\bibitem{GR} R. Glowinski, J. Rappaz: Approximation of a nonlinear elliptic problem arising in a non-
Newtonian fluid flow model in glaciology. M2AN Math. Model. Numer. Anal. {\bf 37} (2003),
175--186. https://doi.org/10.1051/m2an:2003012
    
 \bibitem{HV} M. A. Herrero, J. L. Vazquez: On the propagation properties of a non linear degenerate
parabolic equation. Commun. Partial Differ. Equ. {\bf 7} (1982), 1381--1402. https://doi.org/10.
1080/03605308208820255

\bibitem{prel} P. Jebeleana, J. Mawhin b, C. Serban:
Multiple critical orbits to partial periodic perturbations of the
$p$-relativistic operator. Appl. Math. Letters 104 (2020) 106220, https://doi.org/10.1016/j.aml.2020.106220 
    
\bibitem{Ma} J. Mawhin: Boundary value problems for nonlinear perturbations of singular $\Phi$-
Laplacians. In: V. Staicu (Ed.), Differential equations, chaos and variational problems, Progress
in Nonlinear Differential Equations and Their Applications, Vol. 75, Birkh\"auser, Basel,
2007, 247--256. 
    
 \bibitem{PeronaMalik} 
P. Perona, J. Malik: Scale-space and edge detection using anisotropic diffusion. IEEE Transactions on Pattern Analysis and Machine 
Intelligence. 12 (7): 629–639 (1990). doi:10.1109/34.56205


\bibitem{PR} M. Picasso, J. Rappaz, A. Reist, M. Funk, H. Blatter: Numerical simulation of the
motion of a two-dimensional glacier. Internat. J. Numer. Methods Engrg. {\bf 60} (2004), 995--
1009. https://doi.org/10.1002/nme.997

\end{thebibliography}
\end{document}